%% LyX 2.1.4 created this file.  For more info, see http://www.lyx.org/.
%% Do not edit unless you really know what you are doing.
\documentclass[oneside,english]{amsart}
\usepackage[T1]{fontenc}
\usepackage[latin9]{inputenc}
\usepackage{amsthm}
\usepackage{amssymb}
\usepackage{graphicx}

\makeatletter
%%%%%%%%%%%%%%%%%%%%%%%%%%%%%% Textclass specific LaTeX commands.
\numberwithin{equation}{section}
\numberwithin{figure}{section}
\theoremstyle{plain}
\newtheorem{thm}{\protect\theoremname}
  \theoremstyle{definition}
  \newtheorem{defn}[thm]{\protect\definitionname}
  \theoremstyle{remark}
  \newtheorem{rem}[thm]{\protect\remarkname}
  \theoremstyle{plain}
  \newtheorem{lem}[thm]{\protect\lemmaname}
  \theoremstyle{plain}
  \newtheorem{cor}[thm]{\protect\corollaryname}
  \theoremstyle{plain}
  \newtheorem{prop}[thm]{\protect\propositionname}

\makeatother

\usepackage{babel}
  \providecommand{\corollaryname}{Corollary}
  \providecommand{\definitionname}{Definition}
  \providecommand{\lemmaname}{Lemma}
  \providecommand{\propositionname}{Proposition}
  \providecommand{\remarkname}{Remark}
\providecommand{\theoremname}{Theorem}

\begin{document}

\title{Monotone Subsequences in High-Dimensional Permutations}

\author{Nathan Linial}

\address{School of Computer Science and Engineering, The Hebrew University
of Jerusalem, Jerusalem 91904, Israel.}

\email{nati@cs.huji.ac.il}

\thanks{Supported by ERC grant 339096 \textquotedbl{}High-Dimensional Combinatorics\textquotedbl{}.}

\author{Michael Simkin}

\address{Institute of Mathematics and Federmann Center for the Study of Rationality,
The Hebrew University of Jerusalem, Jerusalem 91904, Israel.}

\email{menahem.simkin@mail.huji.ac.il}
\begin{abstract}
This paper is part of the ongoing effort to study high-dimensional
permutations. We prove the analogue to the Erd\H{o}s-Szekeres theorem:
For every $k\ge1$, every order-$n$ $k$-dimensional permutation
contains a monotone subsequence of length $\Omega_{k}\left(\sqrt{n}\right)$,
and this is tight. On the other hand, and unlike the classical case,
the longest monotone subsequence in a random $k$-dimensional permutation
of order $n$ is asymptotically almost surely $\Theta_{k}\left(n^{\frac{k}{k+1}}\right)$.
\end{abstract}

\maketitle
The study of monotone subsequences in permutations began with the
famous Erd\H{o}s-Szekeres theorem \cite{erdos1935combinatorial}.
Since then numerous proofs and generalizations have emerged (see Steele's
survey \cite{steele1995variations}). We recall the theorem:
\begin{thm}
\label{thm:balanced est}Every permutation in $S_{n}$ contains a
monotone subsequence of length at least $\left\lceil \sqrt{n}\right\rceil $,
and this is tight: for every $n$ there exists some permutation in
$S_{n}$ in which all monotone subsequences are of length at most
$\left\lceil \sqrt{n}\right\rceil $.
\end{thm}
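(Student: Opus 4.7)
The plan is to establish the lower bound via the classical Dilworth-style pigeonhole argument, and then handle tightness by exhibiting an explicit construction.

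For the lower bound, set $m:=\left\lceil \sqrt{n}\right\rceil$ and, given a permutation $\pi\in S_{n}$, attach to each index $i\in[n]$ the pair $(a_{i},b_{i})$, where $a_{i}$ is the length of the longest increasing subsequence of $\pi$ that ends at position $i$, and $b_{i}$ is the length of the longest decreasing subsequence ending at position $i$. The key observation is that the map $i\mapsto(a_{i},b_{i})$ is injective: for $i<j$, if $\pi(i)<\pi(j)$ one may extend any increasing subsequence ending at $i$, forcing $a_{j}>a_{i}$, and symmetrically $b_{j}>b_{i}$ if $\pi(i)>\pi(j)$. Now if every monotone subsequence had length at most $m-1$, then all $n$ pairs would lie in $\{1,\dots,m-1\}^{2}$, giving $n\le(m-1)^{2}$; but by definition of $m$ this is false, so some coordinate must reach $m$.

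For tightness, I would construct a permutation of $[n]$ by partitioning $[n]$ into $m=\left\lceil \sqrt{n}\right\rceil $ consecutive blocks, each of length at most $m$, and listing each block in decreasing order while placing the blocks in increasing order of the values they contain. Concretely, for $n=m^{2}$ this yields $m,m-1,\dots,1,\,2m,2m-1,\dots,m+1,\,\dots$; for general $n$ one trims the final block. Any decreasing subsequence is confined to a single block and so has length at most $m$, while any increasing subsequence picks at most one element from each block and so has length at most $m$. A short check confirms that slightly uneven block sizes do not increase these bounds beyond $\left\lceil \sqrt{n}\right\rceil$.

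The main conceptual point is really the injectivity of $i\mapsto(a_{i},b_{i})$; once this is in hand, both directions are short. The only place where care is needed is matching the ceiling carefully: one must verify that $(m-1)^{2}<n\le m^{2}$ whenever $m=\left\lceil \sqrt{n}\right\rceil $, so that the pigeonhole step is strict and the block construction fits inside $[n]$.
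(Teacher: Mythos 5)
Your proof is correct. Note, however, that the paper does not actually prove Theorem~\ref{thm:balanced est}: it is quoted as the classical Erd\H{o}s--Szekeres result, with \cite{erdos1935combinatorial} cited, and no proof is offered. So there is nothing in the paper to compare against directly. That said, your argument is the standard one and is sound: the injectivity of $i\mapsto(a_i,b_i)$ (a Dilworth/Mirsky-style argument in disguise) gives the lower bound, and the ``decreasing blocks listed in increasing order'' construction gives tightness; the inequality $(m-1)^2<n\le m^2$ for $m=\lceil\sqrt{n}\rceil$ is indeed what makes both halves click. It is worth observing that your proof is in exactly the same spirit as the paper's high-dimensional argument: Lemma~\ref{lem:dilworth consequence} plays the role of your pigeonhole on $(a_i,b_i)$, and the modular construction in the proof of Theorem~\ref{thm:balanced h.d. est} plays the role of your block permutation. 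One small presentational refinement: rather than ``$m$ consecutive blocks, each of length at most $m$'' (which risks empty blocks), take $\lceil n/m\rceil\le m$ nonempty blocks of size at most $m$; then an increasing subsequence meets each block at most once (length $\le\lceil n/m\rceil\le m$) and a decreasing one lies in a single block (length $\le m$), with no edge cases.
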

In order to derive a high-dimensional analogue of theorem \ref{thm:balanced est}
we need to define high-dimensional permutations and their monotone
subsequences. If we view a permutation as a sequence of distinct real
numbers, it is suggestive to consider sequences of points in $\mathbb{R}^{k}$,
with coordinatewise monotonicity. The following argument is attributed
by Kruskal \cite{kruskal1953monotonic} to de Bruijn: Repeatedly apply
theorem \ref{thm:balanced est} to conclude that every sequence $x_{1},x_{2},\ldots,x_{n}\in\mathbb{R}^{k}$
must have a coordinatewise monotone subsequence of length $n^{\frac{1}{2^{k}}}$,
and this is tight up to an additive constant. In \cite{kruskal1953monotonic}
one considers projections of the points to a line and defines the
length of the longest monotone subsequence according to the line with
the longest such subsequence. Szabó and Tardos \cite{szabo2001multidimensional}
consider sequences in $\mathbb{R}^{k}$ that avoid at least one of
the $2^{k}$ coordinatewise orderings.

Here we adopt the perspective of \cite{linial2014upper} of a high-dimensional
analogue of permutation matrices, and monotone subsequences are defined
by strict coordinatewise monotonicity. We show (theorem \ref{thm:balanced h.d. est})
that every $k$-dimensional permutation of order $n$ has a monotone
subsequence of length $\Omega_{k}\left(\sqrt{n}\right)$, and this
is tight up to the implicit multiplicative constant.

A related question, posed by Ulam \cite{ulam1961monte} in 1961, concerns
the distribution of $H_{n}^{1}$, the length of the longest increasing
subsequence in a random member of $S_{n}$. In 1972 Hammersley \cite{hammersley1972few}
showed that there exists some $C>0$ s.t.\ $H_{n}^{1}/\sqrt{n}$
converges to $C$ in probability. In 1977 Logan and Shepp \cite{logan1977variational}
showed that $C\geq2$ and Vershik and Kerov \cite{vershik1977asymptotics}
demonstrated that $C\leq2$, yielding the statement:
\begin{thm}
\label{thm:longest in random EST}Let $H_{n}^{1}$ be the length of
the longest increasing subsequence in a uniformly random member of
$S_{n}$. Then $\lim_{n\rightarrow\infty}H_{n}^{1}n^{-\frac{1}{2}}=2$
in probability.
\end{thm}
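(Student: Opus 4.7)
The plan is to follow the classical two-step route: first show via subadditivity that $H_n^1/\sqrt{n}$ converges in probability to some constant $C > 0$, then pin down $C = 2$ via the RSK correspondence and asymptotic analysis of the Plancherel measure on Young diagrams.

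For the first step, following Hammersley, I would embed the problem in a rate-$1$ Poisson point process on $[0,t]^2$ and let $L(t)$ denote the longest coordinatewise-increasing chain of points. Conditioned on there being $N$ points, the induced permutation is uniform on $S_N$, so $L(t)$ has the distribution of $H_N^1$ with $N$ Poisson of mean $t^2$. Superadditivity of $\mathbb{E}[L(t)]$, obtained by concatenating chains from disjoint subsquares, together with Fekete's lemma gives $\mathbb{E}[L(t)]/t \to C$ for some $C \in (0,\infty)$. Concentration estimates plus de-Poissonization then transfer this to the claim $H_n^1/\sqrt{n} \to C$ in probability.

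For the second step, invoke the Robinson-Schensted correspondence, a bijection between $S_n$ and pairs of standard Young tableaux of a common shape $\lambda \vdash n$, together with the key fact that $H_n^1(\sigma) = \lambda_1$, the length of the first row of the associated shape. Under the uniform measure on $S_n$, the shape $\lambda$ follows the Plancherel measure $\mathbb{P}(\lambda) = (f^\lambda)^2/n!$, with $f^\lambda$ counting standard Young tableaux of shape $\lambda$ and given explicitly by the hook length formula. For the upper bound $C \le 2$, I would estimate $\mathbb{P}(\lambda_1 \ge (2+\varepsilon)\sqrt{n})$ by summing $(f^\lambda)^2/n!$ over partitions with long first row; Stirling applied to the hook length formula makes each summand exponentially small in $\sqrt{n}$, while the number of partitions of $n$ grows only subexponentially in $\sqrt{n}$, so the total is $o(1)$. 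For the lower bound $C \ge 2$, following Logan-Shepp, rescale diagrams by $1/\sqrt{n}$ and recognize $\log\mathbb{P}(\lambda)$ as a Riemann sum approximating an explicit functional of the continuous shape; a variational analysis identifies the unique minimizer (the Vershik-Kerov-Logan-Shepp limit curve) whose first row reaches $x = 2$ in the rescaled coordinates, yielding $\lambda_1 \ge (2-\varepsilon)\sqrt{n}$ with high probability.

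The main obstacle is the variational problem for the lower bound: explicitly solving for the minimizer of the Plancherel entropy functional and verifying that its support extends all the way to $x = 2$. The upper bound is essentially a Stirling-plus-hook-length calculation and does not present serious difficulty once the RSK reduction is in place.
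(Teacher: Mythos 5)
The paper does not prove Theorem \ref{thm:longest in random EST}; it is stated as a known background result, with the convergence in probability attributed to Hammersley, the lower bound $C\ge 2$ to Logan and Shepp, and the upper bound $C\le 2$ to Vershik and Kerov. Your outline --- Poissonization and superadditivity for convergence to a constant, then RSK plus Plancherel-measure asymptotics to identify the constant --- follows exactly these sources, so strategically it matches what the paper relies on.

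There is, however, a real gap in your sketch of the upper bound. You claim that the number of partitions of $n$ ``grows only subexponentially in $\sqrt n$''; that is false. By the Hardy--Ramanujan formula, $p(n)\sim\frac{1}{4n\sqrt{3}}\exp\bigl(\pi\sqrt{2n/3}\bigr)$, which is exponential in $\sqrt{n}$. Since even the most probable Plancherel shape has probability only $\exp\bigl(-\Theta(\sqrt{n})\bigr)$, the per-shape probabilities and the partition count live at the same exponential scale in $\sqrt n$, and the entire content of the Vershik--Kerov bound is the precise comparison of these two exponents. The crude Stirling-plus-union-bound you describe yields only $C\le e$: bounding $\Pr[\lambda_1\ge\ell]$ by the expected number of length-$\ell$ increasing subsequences (equivalently, by $\binom{n}{\ell}/\ell!$ via the branching rule and the hook length formula) gives $o(1)$ only once $\ell>e\sqrt n$, which is in fact exactly the first-moment argument the paper itself uses to bound $H_n^k$ in the high-dimensional setting. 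Closing the gap from $e$ down to $2$ requires the same variational machinery you invoke for the lower bound, applied to show that the large-deviation rate at shapes with $\lambda_1>(2+\varepsilon)\sqrt n$ strictly exceeds $\pi\sqrt{2/3}$; this does not follow from Stirling and a count of partitions alone. The remainder of your outline --- Hammersley's Poissonization, Fekete, de-Poissonization, RSK and the first-row correspondence, and the Logan--Shepp variational minimizer for $C\ge 2$ --- is the standard and correct route.
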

This result was famously refined in 1999 by Baik, Deift, and Johansson
\cite{baik1999distribution} who related the limiting distribution
of $H_{n}^{1}$ to the Tracy-Widom distribution.

Using coordinatewise monotonicity Bollobás and Winkler \cite{bollobas1988longest}
extended theorem \ref{thm:longest in random EST} to show that the
longest increasing subsequence among $n$ independently random points
in $\mathbb{R}^{k}$ is typically of length $c_{k}n^{\frac{1}{k}}$
for some $c_{k}\in\left(0,e\right)$. We show (theorem \ref{thm:random monotone})
that the longest monotone subsequence of a typical $k$-dimensional
permutation of order $n$ has length $\Theta_{k}\left(n^{\frac{k}{k+1}}\right)$.
A $k$-dimensional permutation can be viewed as a set of $n^{k}$
points in $\mathbb{R}^{k+1}$, and it is interesting to note this
asymptotic match with Bollobás and Winkler's result.

\section{Definitions and Main Results}

\textbf{Note:} throughout the paper all asymptotic expressions are
in terms of $n\to\infty$ and $k$ fixed.

As discussed in \cite{linial2014upper} and \cite{linial2014vertices},
we equate a permutation with the corresponding permutation matrix,
i.e., an $n\times n$ $\left(0,1\right)$-matrix in which each row
or column (henceforth, \emph{line}) contains a single $1$.\emph{
}We correspondingly define an \emph{order-$n$ $k$-dimensional permutation}
as an $\left[n\right]{}^{k+1}$ $\left(0,1\right)$-array in which
each line contains precisely one $1$. A \emph{line} in an $\left[n\right]{}^{k+1}$
array is comprised of all the positions obtained by fixing $k$ coordinates
and varying\ the remaining coordinate. We denote the set of order-$n$
$k$-dimensional permutations by $L_{n}^{k}$.

For a given $A\in L_{n}^{k}$ and $\alpha\in\left[n\right]^{k}$,
there is a unique $t\in\left[n\right]$ s.t.\ ${A\left(\alpha,t\right)=1}$.
Since $t$ is uniquely defined by $\alpha$, we can write $t=f_{A}(\alpha)$.
The function $f_{A}$ has the property that for every $1\leq j\leq k$
and $i_{1},\ldots,i_{j-1},i_{j+1},\ldots,i_{k}\in\left[n\right]$,
$\left\{ f_{A}\left(i_{1},\ldots,i_{j-1},t,i_{j+1},\ldots,i_{k}\right):1\leq t\leq n\right\} =\left[n\right]$.
In fact, the mapping $A\mapsto f_{A}$ is a bijection between $L_{n}^{k}$
and the family of $\left[n\right]{}^{k}$ arrays in which every line
contains each element in $\left[n\right]$. In dimension one this
is exactly the identification between permutation matrices and permutations.
This shows in particular that two-dimensional permutations, i.e.,
members of $L_{n}^{2}$, are order-$n$ \emph{Latin squares}.

We denote by $G_{A}$ the \emph{support} of $A\in L_{n}^{k}$, i.e.,
the set of $\alpha\in\left[n\right]^{k+1}$ s.t.\ $A\left(\alpha\right)=1$.

Higher-dimensional monotonicity is defined coordinatewise:
\begin{defn}
\label{def:monotonicity}A length-$m$ monotone subsequence in $A\in L_{n}^{k}$
is a sequence $\alpha^{1},\alpha^{2},\ldots,\alpha^{m}\in G_{A}$
s.t.\ for every $1\leq j\leq k+1$ the sequence $\alpha_{j}^{1},\alpha_{j}^{2},\ldots,\alpha_{j}^{m}$
is \emph{strictly} monotone.
\end{defn}
In dimension one this clearly coincides with the definition of a monotone
subsequence in a permutation $\pi\in S_{n}$.

We are now ready to state a high-dimensional analogue of the Erd\H{o}s-Szekeres
theorem:
\begin{thm}
\label{thm:balanced h.d. est}Every member of $L_{n}^{k}$ contains
a monotone subsequence of length $\Omega\left(\sqrt{n}\right)$. The
bound is tight up to the implicit multiplicative constant: for every
$n$ and $k$ there exists some $A\in L_{n}^{k}$ s.t.\ every monotone
subsequence in $A$ has length $O\left(\sqrt{n}\right)$.
\end{thm}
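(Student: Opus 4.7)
For the lower bound $\Omega(\sqrt{n})$, I would use induction on $k$, with Theorem~\ref{thm:balanced est} as the base case. The key structural fact is that for any $A\in L_{n}^{k}$ and $t\in[n]$, the fibre $B_{t}:=f_{A}^{-1}(t)\subseteq[n]^{k}$ is itself a $(k-1)$-dimensional permutation of order $n$, since on any axis-parallel line in $[n]^{k}$ the function $f_{A}$ hits each value exactly once. Applying the inductive hypothesis to each slice supplies a monotone subsequence of length $\Omega(\sqrt{n})$ in the first $k$ coordinates within each $B_{t}$.

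The harder step is the lifting: combining the per-slice monotone subsequences into a single monotone subsequence of $A$ itself. Traversing the slices in order of $t$ automatically makes the last coordinate monotone, so the task reduces to choosing representatives $\alpha^{t}\in B_{t}$ whose sequence is coordinatewise monotone in $[n]^{k}$. I would pigeonhole over the $2^{k}$ possible direction profiles of the inductive subsequences to obtain $\Omega(n)$ slices sharing a common profile, then choose $\alpha^{t}$ with $\alpha_{1}^{t}=t$ (possible in $n^{k-2}$ ways when $k\geq 2$) to handle the first coordinate for free, and recurse on the remaining $k-1$ coordinates. The main obstacle is preserving the $\sqrt{n}$ bound: a naive iteration of Theorem~\ref{thm:balanced est} yields only $n^{1/2^{k+1}}$, so the gain must come from genuine use of the Latin structure (each fibre is not merely a set but carries a balanced design on its own) rather than a generic argument in $\mathbb{R}^{k}$.

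For the tight upper bound I would lift the classical Erd\H{o}s-Szekeres extremal permutation. Let $\pi\in S_{n}$ be a permutation all of whose monotone subsequences have length at most $\lceil\sqrt{n}\rceil$, and set $f_{A}(\alpha_{1},\ldots,\alpha_{k}):=\pi\left(\alpha_{1}+\cdots+\alpha_{k}\bmod n\right)$. This is a valid $k$-dimensional permutation because along any axis-parallel line it is a translate of $\pi$, hence a permutation of $[n]$. Given a monotone subsequence of length $m$, strict monotonicity of $v^{i}=\pi(u^{i})$ together with the Erd\H{o}s-Szekeres extremality of $\pi$ forces, in the case where all $k$ coordinate directions of $\alpha^{i}$ agree, that $u^{i}=\sum_{j}\alpha_{j}^{i}\bmod n$ decomposes into at most $k+1$ strictly monotone pieces; each such piece yields a monotone subsequence of $\pi$ and therefore has length at most $\sqrt{n}$, giving $m=O(\sqrt{n})$. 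The mixed-direction case, in which the unreduced sum need not be monotone, is the main technical obstacle; it may require refining the construction, e.g.\ by composing $\pi$ with a nested block structure on $[n]=[\sqrt{n}]^{2}$, in order to enforce the same bound uniformly across all $2^{k+1}$ direction profiles.
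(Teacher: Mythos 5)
Your proposal differs from the paper's proof in the lower bound and is close in spirit on the upper bound, but there are genuine gaps in both parts.

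\textbf{Lower bound.} Your structural observation that each fibre $B_t = f_A^{-1}(t)\subseteq[n]^k$ is itself a $(k-1)$-dimensional permutation of order $n$ is correct and a nice fact. But the ``lifting'' step you describe does not close: after picking, for each of $\Omega(n)$ values of $t$, a representative of a monotone subsequence of $B_t$, you still need those representatives to be coordinatewise monotone in the first $k$ coordinates. Fixing $\alpha_1^t=t$ does not come for free, because the inductive subsequence inside $B_t$ need not contain any element with first coordinate equal to $t$; and even if you instead consider the slice $\{(\alpha_2,\dots,\alpha_k):(t,\alpha_2,\dots,\alpha_k)\in B_t\}$ (which is a $(k-2)$-dimensional permutation), you now have $n$ \emph{different} such objects and face exactly the same transversal problem one dimension down, with no obvious source of amplification. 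The pigeonhole-over-direction-profiles idea costs only a constant factor but does not by itself prevent the $n^{1/2^{k}}$-type loss you correctly worry about. The paper avoids the inductive route entirely: it defines two partial orders $<_1$ and $<_2$ on $G_A$, applies Lemma~\ref{lem:dilworth consequence} (the $h\cdot w\ge |P|$ consequence of Dilworth/Mirsky), and proves Lemma~\ref{lem:hypercube anti chains}: any set that is an antichain under both $<_1$ and $<_2$ has size $O(k\,n^{k-1})$. Two applications of Lemma~\ref{lem:dilworth consequence} then give a chain of length $\Omega(\sqrt{n/k})$ in one of the two orders. This is a fundamentally different (and non-inductive) mechanism; it is not clear your fibre recursion can be made to work at all.

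\textbf{Upper bound.} Your construction $f_A(\alpha_1,\dots,\alpha_k)=\pi(\alpha_1+\cdots+\alpha_k\bmod n)$ is exactly the shape of the paper's construction, and your analysis of the ``all coordinates same direction'' case is correct. However, using an \emph{arbitrary} Erd\H{o}s--Szekeres-extremal $\pi$ is not enough, and you correctly identify the mixed-direction case as the obstacle. The paper resolves it by choosing $\pi$ to be (essentially) \emph{linear}, $\pi(\ell)=-M\ell\bmod n$ with $M\approx\sqrt{n/(k+1)}$ (and a careful variant when $\gcd(M,n)>1$). Linearity is what makes the analysis go through when the coordinate directions split into $S^+$ and $S^-$: one writes $M\sum_{i\in S^+}|\Delta_i|+\Delta_{k+1}=M\sum_{i\in S^-}|\Delta_i|+d_j n$ and then bounds separately the number of steps with $d_j<0$, $d_j=0$, $d_j>0$. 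None of this structure is available for a generic extremal $\pi$, and your ``nested block structure on $[n]=[\sqrt{n}]^2$'' is in fact just a description of the arithmetic-progression permutation --- i.e., of the specific $\pi$ the paper uses --- so you were circling the right construction, but the argument that it works against all $2^{k+1}$ direction profiles is the substantive part of the proof and is missing from your proposal.
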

As an analogue of theorem \ref{thm:longest in random EST} we have:
\begin{thm}
Let $H_{n}^{k}$ be the length of the longest monotone subsequence
in a uniformly random element of $L_{n}^{k}$. Then $E\left[H_{n}^{k}\right]=\Theta\left(n^{\frac{k}{k+1}}\right)$
and $H_{n}^{k}=\Theta_{k}\left(n^{\frac{k}{k+1}}\right)$ a.a.s.\end{thm}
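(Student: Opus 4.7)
The plan is a first-moment argument for the upper bound and a second-moment argument for the lower bound. Let $X_m$ denote the number of strictly monotone subsequences of length $m$ in a uniformly random $A\in L_n^k$.

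\emph{Upper bound.} A length-$m$ monotone subsequence in $G_A$ is specified precisely by a strictly monotone sequence $\beta^1,\ldots,\beta^m\in[n]^k$ (the projection onto the first $k$ coordinates, $\binom{n}{m}^k$ such sequences) whose image $f_A(\beta^1),\ldots,f_A(\beta^m)$ is strictly increasing in $[n]$. Since the distribution of $f_A$ is invariant under post-composition with any $\sigma\in S_n$, the joint distribution of $(f_A(\beta^1),\ldots,f_A(\beta^m))$ is symmetric under the $S_n$-action on values; conditional on these $m$ values being pairwise distinct, their order type is therefore uniform in $S_m$, so the probability of being strictly increasing is at most $1/m!$. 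Hence
\[
E[X_m]\;\le\;\frac{\binom{n}{m}^{k}}{m!}\;\le\;\left(\frac{e^{k+1}n^{k}}{m^{k+1}}\right)^{m},
\]
which decays exponentially once $m\ge(e+\varepsilon)n^{k/(k+1)}$. Thus $H_n^k=O(n^{k/(k+1)})$ a.a.s., and combining with the trivial bound $H_n^k\le n$ yields $E[H_n^k]=O(n^{k/(k+1)})$.

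\emph{Lower bound.} Take $m=cn^{k/(k+1)}$ for a small constant $c>0$. The same exchangeability gives $E[X_m]=q_m\binom{n}{m}^k/m!$, where $q_m:=\Pr[f_A(\beta^1),\ldots,f_A(\beta^m)\text{ pairwise distinct}]$ is independent of the choice of a fixed strictly monotone $(\beta^i)$. If $q_m$ stays within a $1+o(1)$ factor of its ``independent'' analogue $n!/((n-m)!\,n^m)$, one obtains $E[X_m]=(1+o(1))\binom{n}{m}^{k+1}/n^m$, which grows super-polynomially. To promote $E[X_m]\to\infty$ to $X_m\ge 1$ a.a.s.\ I would apply Paley--Zygmund, $\Pr[X_m>0]\ge E[X_m]^2/E[X_m^2]$, and expand the second moment as $\sum_{j=0}^m\sum_{|S\cap T|=j}\Pr[S\cup T\subseteq G_A]$. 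The $j=0$ term contributes essentially $E[X_m]^2$ by near-independence of disjoint supports; the $j\ge 1$ contributions are controlled by applying the same joint-distribution inputs to the $2m-j$ simultaneous constraints in $S\cup T$. This yields $H_n^k=\Omega(n^{k/(k+1)})$ a.a.s., and in turn $E[H_n^k]=\Omega(n^{k/(k+1)})$.

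\emph{Main obstacle.} The core technical difficulty is to control the joint behaviour of the values of a random Latin $k$-cube on a prescribed set of inputs---both to lower-bound $q_m$ and to upper-bound the joint probabilities $\Pr[S\cup T\subseteq G_A]$ that appear in the variance. For $k=1$ these joint distributions are explicit, but for $k\ge 2$ they are notoriously delicate: while single-entry marginals are uniform on $[n]$, clean independence-type estimates for many entries at once are only known in restricted ranges. I expect the bulk of the proof to be devoted to establishing such joint estimates at the sharp order $m=\Theta(n^{k/(k+1)})$, most plausibly via a switching argument on $L_n^k$ or a permanent bound in the spirit of Egorychev--Falikman and Br\'egman.
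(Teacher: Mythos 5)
Your upper bound is exactly the paper's argument: a first moment over the $\binom{n}{m}^{k}$ strictly increasing $m$-tuples of positions in $[n]^{k}$, with $\Pr[f_A \text{ increasing on the tuple}]\le 1/m!$ obtained from the free $S_n$-action on one coordinate. No issues there.

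Your lower bound, however, proposes a direct second moment on $X_m$, the count of length-$m$ monotone subsequences, and you correctly flag the sticking point yourself: to make Paley--Zygmund work you would need to lower-bound $q_m$ and, much more seriously, upper-bound the overlap terms $\Pr[S\cup T\subseteq G_A]$ for two $m$-chains sharing $j\ge 1$ positions, uniformly over the geometry of the overlap. That amounts to understanding the joint law of a random Latin $k$-cube on $\Theta(n^{k/(k+1)})$ prescribed entries, which neither your sketch nor the available literature supplies at this scale. As written, the lower bound does not go through; it is a plan with the hard step left open.

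The paper avoids this difficulty entirely by choosing a weaker but tractable certificate. It sets $m=Cn^{1/(k+1)}$ and partitions $[n]^{k+1}$ into $\lfloor n/m\rfloor$ diagonal subcubes $D_i=[(i-1)m+1,im]^{k+1}$. Any $\alpha\in D_i$ and $\beta\in D_j$ with $i<j$ satisfy $\alpha<\beta$ coordinatewise, so the number of occupied boxes $\sum_i Z_i$ is automatically a monotone subsequence length, i.e.\ $H_n^k\ge\sum_i Z_i$. Two things now make the analysis elementary. First, $\Pr[Z_i=1]$ is lower-bounded by a second moment on $X_i=\sum_{\alpha\in D_i}A_\alpha$, which needs only the pairwise probabilities $\Pr[A_\alpha A_\beta=1]\le\frac{1}{n(n-1)}$, again via a single-coordinate shuffle; since $|D_i|/n=C^{k+1}$ is constant this gives $\Pr[Z_i=1]\ge\frac{C^{k+1}}{C^{k+1}+1}-o(1)$, hence $E[H_n^k]=\Omega(n^{k/(k+1)})$. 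Second, for concentration the paper applies the Impagliazzo--Kabanets Chernoff bound to the $Z_i$, whose hypothesis $\Pr[\prod_{i\in S}Z_i=1]\le\alpha^{|S|}$ is verified by a crude union bound over one prescribed nonzero position per box, each such tuple having probability at most $(n-|S|)!/n!$, with only $m^{(k+1)|S|}$ tuples to sum over. The whole lower bound therefore uses nothing beyond pairwise marginals and union bounds, and never touches the delicate many-point joint estimates on which your proposal founders.
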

\begin{rem}
\label{rem:Latin symmetries}Note the following symmetries of high-dimensional
permutations: 
\begin{enumerate}
\item \label{enu:coordinate permutations}$S_{k+1}$ acts on $L_{n}^{k}$
by permuting the coordinates.
\item \label{enu:permutation of coordinate values}For each $1\leq i\leq k+1$,
the group $S_{n}$ acts on $L_{n}^{k}$ by permuting the values of
the $i$-th coordinate of each $A\in L_{n}^{k}$.
\item \label{enu:Reversal}A special case of \ref{enu:permutation of coordinate values},
is reversal, i.e. applying the map $a\mapsto1+n-a$ on the $i$-th
coordinate.
\end{enumerate}
Note that actions \ref{enu:coordinate permutations} and \ref{enu:Reversal}
preserve monotonicity.
\end{rem}

\section{A High-Dimensional Analogue of the Erd\H{o}s-Szekeres Theorem}

We begin by proving theorem \ref{thm:balanced h.d. est}. Due to the
Erd\H{o}s-Szekeres theorem it suffices to consider the case $k\geq2$.

We define two partial orders on $\left[n\right]^{k+1}$: Let $\alpha,\beta\in\left[n\right]^{k+1}$.
$\alpha<_{1}\beta$ if for all $1\leq i\leq k+1$, $\alpha_{i}<\beta_{i}$,
and $\alpha<_{2}\beta$ if for all $1\leq i\leq k$, $\alpha_{i}<\beta_{i}$
and, $\alpha_{k+1}>\beta_{k+1}$. For $\alpha,\beta\in\left[n\right]^{k}$
we write $\alpha<\beta$ if for all $1\leq i\leq k$, $\alpha_{i}<\beta_{i}$.

Recall that the \emph{height} $h\left(P\right)$ of a poset $P$ is
the size of the largest chain in $P$ and its \emph{width} $w\left(P\right)$
is the size of its largest anti-chain. An easy consequence of Dilworth's
theorem \cite{dilworth1950decomposition} or Mirsky's theorem \cite{mirsky1971dual}
is:
\begin{lem}
\label{lem:dilworth consequence}For every finite poset $P$ there
holds $h\left(P\right)\cdot w\left(P\right)\geq\left|P\right|$.
\end{lem}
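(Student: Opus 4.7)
The plan is to deduce this inequality directly from Mirsky's theorem, which states that any finite poset $P$ admits a partition into exactly $h(P)$ antichains. Since every antichain in $P$ contains at most $w(P)$ elements, summing the sizes of the parts of such a partition gives $|P| \leq h(P) \cdot w(P)$, which is precisely the claim.

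If one prefers a self-contained argument rather than quoting Mirsky, I would give the standard one-paragraph proof in line: for each $x \in P$, let $\ell(x)$ denote the length of the longest chain in $P$ whose maximum element is $x$. Then $\ell$ takes values in $\{1, 2, \ldots, h(P)\}$, and each level set $\ell^{-1}(j)$ is an antichain, because $x < y$ in $P$ forces $\ell(x) < \ell(y)$. Pigeonhole over the $h(P)$ level sets produces an antichain of size at least $|P|/h(P)$, which in turn is bounded above by $w(P)$. Dilworth's theorem affords a symmetric route: partition $P$ into $w(P)$ chains, each of size at most $h(P)$, and sum.

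There is no genuine obstacle: the lemma is a one-line pigeonhole consequence of either a minimum antichain cover or a minimum chain cover, and the only real choice is which classical decomposition theorem to invoke.
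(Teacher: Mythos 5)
Your proposal is correct and follows exactly the route the paper indicates: the paper merely asserts the lemma as ``an easy consequence of Dilworth's theorem or Mirsky's theorem'' without spelling out the argument, and you supply precisely that one-line pigeonhole step from either decomposition (plus the self-contained level-set version of Mirsky). Nothing to flag.
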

We use lemma \ref{lem:dilworth consequence} to show that if $A$
has no long monotone subsequences, then there is a large $A'\subseteq G_{A}$
that is an anti-chain in both $<_{1}$ and $<_{2}$. On the other
hand, the next two lemmas give an upper bound on the size of anti-chains
common to $<_{1}$ and $<_{2}$. This yields the theorem.
\begin{lem}
\label{lem:box order}Let $X$ be an $M\times N$ matrix in which
every two entries in the same column are distinct. Let $S$ be a set
of positions in $X$ such that $X_{a}=X_{b}$ for every $a,b\in S$
with $a$ to the left and above $b$. Then $\left|S\right|\leq M+2N$.\end{lem}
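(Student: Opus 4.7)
The plan is to partition $S$ column by column: in each column that meets $S$, separate the topmost entry of $S$ (smallest row index) from the rest. Call the former collection the ``tops'' and the latter $S'$ (the ``non-tops''). The tops contribute at most $N$ to $|S|$, so it suffices to bound $|S'|$. My claim is that $S'$ is an antichain under the ``left-and-above'' partial order on positions, and then the standard fact that such antichains in an $M\times N$ grid have size at most $M+N-1$ finishes the job.

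The crux is the antichain claim. Suppose toward a contradiction that $a=(r,j)$ and $b=(r',j')$ both lie in $S'$ with $a$ to the left and above $b$, so $j<j'$ and $r<r'$. Since $a$ is not the top in its column, there is another position $a_0=(r_0,j)\in S$ with $r_0<r$; in particular $a_0$ is also to the left and above $b$. The hypothesis on $S$ then yields $X_{a_0}=X_b=X_a$, but $a_0$ and $a$ share column $j$, contradicting the column-distinctness of $X$.

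The remaining antichain bound is a short calculation. If $R_j\subseteq[M]$ denotes the rows used by the antichain in column $j$, then incomparability across columns forces $\min R_{j_1}\ge\max R_{j_2}$ whenever $j_1<j_2$. Summing $|R_j|\le\max R_j-\min R_j+1$ over the $N'\le N$ nonempty columns makes the $\max$'s and $\min$'s telescope against each other, yielding $\sum_j|R_j|\le M+N'-1\le M+N-1$. Putting everything together, $|S|\le N+(M+N-1)\le M+2N$, as required.

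I expect the main obstacle to be the antichain claim, since it is the one place where both hypotheses on $X$ (column-distinctness together with the left-and-above equality condition) are used simultaneously. The mechanism is simple but needs to be spotted: any non-top position silently carries a strictly higher same-column companion, and that companion, combined with the left-and-above equality rule, manufactures a forbidden repeated value within a single column whenever a would-be antichain violation among non-tops is supposed to occur. Everything else is direct counting.
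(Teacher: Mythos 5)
Your proof is correct and takes a genuinely different route from the paper's. The paper argues by induction on the dimensions of $X$: it shows that either the rightmost column of $X$ contains at most two elements of $S$ or the topmost row contains at most one, and then peels off a column or row accordingly. Your argument is instead a direct decomposition: split $S$ into the column-topmost positions (at most $N$ of them) and the remainder $S'$, then show $S'$ is an antichain under the left-and-above order -- the key being that a non-top position always has a strictly higher same-column companion in $S$, and if two non-tops were comparable, that companion together with the equality hypothesis would force a repeated value in a single column. The telescoping bound of $M+N-1$ on such antichains then finishes. Your route is a bit more structural (and incidentally yields the slightly sharper $M+2N-1$), making explicit the antichain phenomenon that the paper's induction leaves implicit; the paper's induction is shorter to state but requires one to see the case split on the boundary row/column. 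Both proofs exploit the same tension between column-distinctness and the left-above equality constraint, just organized around different decompositions of $S$.
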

\begin{proof}
If either $M=1$ or $N=1$, this is obvious. We prove the claim inductively
by showing that either $S$ has at most two positions in the rightmost
column of $X$ or at most one element in the topmost row of $X$.
Indeed, if $S$ has at least three entries in the rightmost column,
then at least two of them, say $a$ and $b$, are not in the top row.
But there are no repetitions in the same column, so $X_{a}\neq X_{b}$.
It follows that the only element $S$ may have in the top row is at
the top-right corner, for any other such element must equal both $X_{a}$
and $X_{b}$, which is impossible.
\end{proof}
For $k=2$ (i.e.\ the case of Latin squares) we already have the
necessary tools to prove the lower bound in theorem \ref{thm:balanced h.d. est}:
Let $A$ be an order-$n$ Latin square. A $<_{1}$ ($<_{2}$) monotone
sequence in $A$ is a sequence of positions progressing from upper-left
to lower-right in which the values of $f_{A}$ are increasing (decreasing).
Order $G_{A}$ by $<_{1}$, and assume there are no chains of length
$r=\left\lfloor \sqrt{\frac{n}{3}}\right\rfloor $. By lemma \ref{lem:dilworth consequence}
there is an $<_{1}$-anti-chain $A_{1}\subseteq G_{A}$ of size $\frac{\left|G_{A}\right|}{r}=\frac{n^{2}}{r}$.
Order $A_{1}$ by $<_{2}$ and let $A'\subseteq A_{1}$ be an anti-chain.
Note that $A'$ is an anti-chain under both $<_{1}$ and $<_{2}$.
If $S\subseteq\left[n\right]^{2}$ is the set of positions occupied
by the elements of $A'$, then $\left|S\right|\leq3n$ by lemma \ref{lem:box order}.
In other words $w(A_{1})\le3n$, so by lemma \ref{lem:dilworth consequence}
$h\left(A_{1}\right)\geq\frac{\left|A_{1}\right|}{w\left(A_{1}\right)}\geq\frac{n^{2}}{3nr}=\frac{n}{3r}\geq r$.
The height of $A_{1}$ is realized by a $<_{2}$-monotone subsequence
of length $h\left(A_{1}\right)\geq r=\left\lfloor \sqrt{\frac{n}{3}}\right\rfloor $,
which yields the lower bound.

We next extend lemma \ref{lem:box order} to higher dimensions, and
derive a similar technique to prove the general lower bound.
\begin{lem}
\label{lem:hypercube anti chains}Let $A\in L_{n}^{k}$, and let $S\subseteq\left[n\right]^{k}$
be s.t.\ if $a,b\in S$ and $a<b$, then $f_{A}\left(a\right)=f_{A}\left(b\right)$.
Then $\left|S\right|\leq3\left(k-1\right)n^{k-1}$.\end{lem}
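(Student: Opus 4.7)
I plan to proceed by induction on $k$. The base case $k=2$ is immediate from lemma \ref{lem:box order}: viewing $f_A$ as an $n\times n$ matrix, its Latin property ensures that every column has distinct entries, and the hypothesis on $S$ exactly matches the hypothesis of lemma \ref{lem:box order}, yielding $|S|\le n+2n=3n=3(k-1)n^{k-1}$.

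For the inductive step ($k\ge 3$), assume the result in dimension $k-1$ and slice $S$ along its last coordinate: set $T_j:=\{a'\in[n]^{k-1}:(a',j)\in S\}$, so that $|S|=\sum_{j=1}^n|T_j|$. For each $j$, the restriction $g_j(a'):=f_A(a',j)$ is the $f$-function of some $(k-1)$-dimensional permutation $B_j\in L_n^{k-1}$: every axis-parallel line of $g_j$ lifts to an axis-parallel line of $f_A$ with the last coordinate fixed to $j$, which is a bijection to $[n]$ by the Latin property of $A$.

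The main obstacle is that the $(k-1)$-dimensional hypothesis does not descend directly to $T_j$. Indeed, if $a',b'\in T_j$ and $a'<b'$ coordinatewise in $[n]^{k-1}$, then the lifts $(a',j)$ and $(b',j)$ share their $k$-th coordinate and are not strictly comparable in $[n]^k$, so the hypothesis on $S$ is silent about whether $g_j(a')=g_j(b')$. To remedy this, I would decompose each slice as $T_j=R_j\sqcup E_j$, where $R_j$ is a ``regular'' piece on which $g_j$ satisfies the $(k-1)$-dim hypothesis and hence $|R_j|\le 3(k-2)n^{k-2}$ by induction (summing to $3(k-2)n^{k-1}$), and $E_j$ is an ``exceptional'' piece consisting of the elements that witness violations of the in-slice hypothesis.

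The key remaining step is to bound $\sum_j|E_j|\le 3n^{k-1}$, after which $|S|\le 3(k-2)n^{k-1}+3n^{k-1}=3(k-1)n^{k-1}$ as required. The idea is to exploit the cross-slice relations implied by the hypothesis on $S$: whenever $a'\in T_j$, $b'\in T_{j'}$ satisfy $a'<b'$ coordinatewise in $[n]^{k-1}$ and $j<j'$, one has $g_j(a')=g_{j'}(b')$. I would package these relations by viewing the exceptional elements as positions in the auxiliary $n\times n^{k-1}$ matrix $Y[j,a']:=f_A(a',j)$, whose columns (indexed by fixed $a'\in[n]^{k-1}$, varying $j$) are bijections $[n]\to[n]$ by the Latin property of $A$; an adaptation of the induction-on-$(M+N)$ argument of lemma \ref{lem:box order} — extended to allow the column index set to carry the coordinatewise partial order on $[n]^{k-1}$ rather than a total order — should deliver the $3n^{k-1}$ bound. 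The main technical hurdle is precisely this adaptation, together with the bookkeeping needed to ensure that the decomposition $T_j=R_j\sqcup E_j$ interacts correctly with the 2-dimensional argument on $Y$.
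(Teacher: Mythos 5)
Your plan has a genuine gap at its core step. You correctly identify the obstacle—that the hypothesis on $S$ does not descend to the coordinate-slices $T_j$, because two points with the same last coordinate are incomparable in $[n]^k$—but the remedy you propose is never made concrete. The decomposition $T_j = R_j \sqcup E_j$ is not actually defined (which element of a violating pair goes into $E_j$? is the resulting $R_j$ genuinely a set on which the $(k-1)$-dimensional hypothesis holds?), and the asserted bound $\sum_j |E_j| \le 3n^{k-1}$ is justified only by an appeal to ``an adaptation of the induction-on-$(M+N)$ argument of lemma \ref{lem:box order}, extended to allow the column index set to carry the coordinatewise partial order on $[n]^{k-1}$ rather than a total order.'' That adaptation is not obviously available: the inductive step of lemma \ref{lem:box order} hinges on peeling off the (unique) rightmost column or the (unique) topmost row, and under a partial order on columns there is no such extremal column. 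Without it, the key quantitative claim is unsupported, and the proof does not close.

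The paper avoids this difficulty entirely by slicing in a different direction. Rather than fixing the last coordinate (which destroys comparability), it partitions $[n]^k$ into diagonal tubes $P_\alpha=\{(\alpha+m\mathbf{1},i)\}$ indexed by those $\alpha\in[n]^{k-1}$ with $\min_i\alpha_i=1$. Within each tube the first $k-1$ coordinates move in lockstep along a chain, so the restriction of $<$ to a tube is exactly a $2$-dimensional grid order, and the Latin property of $A$ guarantees distinctness within each ``column'' of the associated $M\times n$ matrix. Lemma \ref{lem:box order} then applies verbatim to each tube, giving $|S_\alpha|\le 3n$, and multiplying by the number of tubes (fewer than $(k-1)n^{k-2}$) gives the bound. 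No induction on $k$ and no exceptional-set bookkeeping is needed; the diagonal slicing linearizes the order and makes the $2$-dimensional lemma directly applicable. If you want to salvage an induction-on-$k$ argument along the lines you propose, you would first need to formulate and prove a version of lemma \ref{lem:box order} with partially ordered columns, which is a nontrivial and unestablished claim.
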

\begin{proof}
\begin{figure}

\includegraphics[scale=0.4]{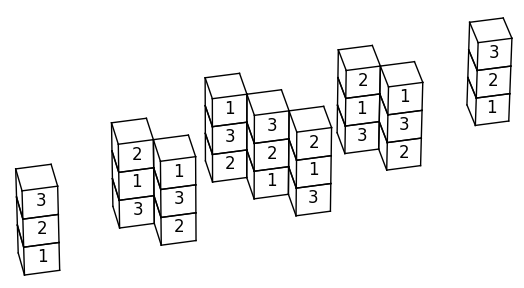}\caption{\label{fig:cube partition}Partitioning $\left[3\right]^{3}$ into
$P_{\alpha}$, $\alpha\in\left[3\right]^{2},\min\left\{ \alpha_{1},\alpha_{2}\right\} =1$.
The value of $f_{A}\left(x_{1},x_{2},x_{3}\right)=x_{1}+2x_{2}+x_{3}\left(\bmod3\right)$
is written on each position in the array.}

\end{figure}
The proof follows by partitioning $\left[n\right]^{k}$ and $S$ into
rectangular ``slices'' s.t.\ the restriction of $f_{A}$ to a single
slice satisfies the hypothesis of lemma \ref{lem:box order}: Let
${\mathbf{1}=\left(1,1,\ldots,1\right)\in\left[n\right]^{k-1}}$.
For $\alpha\in\left[n\right]^{k-1}$ we denote $M_{\alpha}=n-\max\left\{ \alpha_{1},\alpha_{2},\ldots,\alpha_{k-1}\right\} $.
For every $\alpha\in\left[n\right]^{k-1}$ s.t.\ $\min\left\{ \alpha_{1},\alpha_{2},\ldots,\alpha_{k-1}\right\} =1$,
let 
\[
P_{\alpha}=\left\{ \left(\alpha+m\mathbf{1},i\right):0\le m\le M_{\alpha},1\leq i\leq n\right\} \subseteq\left[n\right]^{k}
\]
 and $S_{\alpha}=P_{\alpha}\cap S$. It is easily verified that the
distinct $P_{\alpha}$ constitute a partition of $\left[n\right]^{k}$
and hence the $S_{\alpha}$ are a partition of $S$ (see figure \ref{fig:cube partition}).

We associate with $P_{\alpha}$ the $M\times n$ matrix $X$ given
by $X_{a,b}=f_{A}\left(\alpha+a\mathbf{1},b\right)$. Under this identification
the partial order $<$ on $P_{\alpha}$ corresponds to being above
and to the left in $X$: $\left(\alpha+a_{1}\mathbf{1},b_{1}\right)<\left(\alpha+a_{2}\mathbf{1},b_{2}\right)\iff a_{1}<a_{2}\land b_{1}<b_{2}$.
Since $A$ is a $k$-dimensional permutation, no two entries in the
same column of $X$ are equal. Apply lemma \ref{lem:box order} to
$S_{\alpha}$, to conclude that $\left|S_{\alpha}\right|\leq2M+n\leq3n$.

There are fewer than $\left(k-1\right)n^{k-2}$ multi-indices $\alpha\in\left[n\right]^{k-1}$
s.t.\ ${\min\left\{ \alpha_{1},\alpha_{2},\ldots,\alpha_{k-1}\right\} =1}$,
and so $\left|S\right|\leq3\left(k-1\right)n^{k-1}$, as desired.
\end{proof}
We are now ready to prove theorem \ref{thm:balanced h.d. est}:
\begin{proof}
Let $A\in L_{n}^{k}$ and assume $A$ has no $<_{1}$-chain of length
$r=\left\lfloor \sqrt{\frac{n}{3\left(k-1\right)}}\right\rfloor $.
By lemma \ref{lem:dilworth consequence} $G_{A}$ has a $<_{1}$-anti-chain
$A_{1}$ of cardinality $\left|A_{1}\right|\geq\frac{n^{k}}{r}$.
Order $A_{1}$ according to $<_{2}$ and let $A_{2}\subseteq A_{1}$
be a $<_{2}$-anti-chain, so that $A_{2}$ is an anti-chain under
both $<_{1}$ and $<_{2}$. Let $S=\left\{ x\in\left[n\right]^{k}:\left(x,f_{A}\left(x\right)\right)\in A_{2}\right\} $
be the projection of $A_{2}$ onto the first $k$ coordinates. Since
$f_{A}$ is a function, the projection is injective and so $\left|S\right|=\left|A_{2}\right|$.
By lemma \ref{lem:hypercube anti chains}, $\left|A_{2}\right|=\left|S\right|\leq3\left(k-1\right)n^{k-1}$.
Thus, $w(A_{1})\le3\left(k-1\right)n^{k-1}$, and by lemma \ref{lem:dilworth consequence}
$h(A_{1})\ge\frac{\left|A_{1}\right|}{3\left(k-1\right)n^{k-1}}\geq\frac{n^{k}}{3\left(k-1\right)n^{k-1}r}\geq r$.
But $A_{1}$'s height is realized by a $<_{2}$-chain in $A_{1}\subseteq G_{A}$,
and so $G_{A}$ contains an $<_{2}$-chain of length at least $r=\left\lfloor \sqrt{\frac{n}{3\left(k-1\right)}}\right\rfloor $.
This completes the proof of the first part of the theorem.

For the second part of the theorem, for every $n$ and $k$ we construct
$A\in L_{n}^{k}$ with no monotone subsequences of length $O\left(\sqrt{n}\right)$.
When $n$ is prime, we can use a simple construction that is similar
to one that shows the tightness of the Erd\H{o}s-Szekeres theorem.
This construction is then modified to deal with composite $n$. So
let us assume that $n$ is prime. Let $M=\left\lfloor \sqrt{\frac{n}{k+1}}\right\rfloor $,
and define $A$ as follows:
\[
A\left(\alpha_{1},\alpha_{2},\ldots,\alpha_{k+1}\right)=1\iff M\sum_{i=1}^{k}\alpha_{i}+\alpha_{k+1}=0\left(\bmod n\right)
\]

Since $n$ is prime it follows easily that $A$ is a $k$-dimensional
permutation.

We want to show that if $\alpha^{1},\alpha^{2},\ldots,\alpha^{m}\in G_{A}$
is a monotone subsequence, then for every $1\leq j<m$, $\left\Vert \alpha^{j+1}-\alpha^{j}\right\Vert _{1}$
is large. Because the sequence is monotone we have ${\sum_{j=1}^{m-1}\left\Vert \alpha^{j+1}-\alpha^{j}\right\Vert _{1}=\left\Vert \alpha^{m}-\alpha^{1}\right\Vert _{1}\leq\left(k+1\right)n}$,
which gives an upper bound on $m$.

We may assume w.l.o.g.\ that the $\alpha_{k+1}^{j}$s are increasing;
otherwise, consider the monotone subsequence $\alpha^{m},\alpha^{m-1},\ldots,\alpha^{1}$.
We partition the set of coordinates ${\left[k\right]=S^{+}\dot{\cup}S^{-}}$
into those on which the sequence is increasing, resp. decreasing.
Since all the $\alpha^{j}$s satisfy the same linear equation modulo
$n$, for every $1\leq j<m$ there is some $d_{j}\in\mathbb{Z}$ s.t.
\begin{equation}
M\sum_{i\in S^{+}}\left|\alpha_{i}^{j+1}-\alpha_{i}^{j}\right|+\alpha_{k+1}^{j+1}-\alpha_{k+1}^{j}=M\sum_{i\in S^{-}}\left|\alpha_{i}^{j+1}-\alpha_{i}^{j}\right|+d_{j}n\label{eq:linear relation}
\end{equation}

Let $\ell_{-},\ell_{0},\ell_{+}$ be the number of indices $j$ s.t.
$d_{j}<0,d_{j}=0,d_{j}>0$, respectively. Then $m=\ell_{-}+\ell_{0}+\ell_{+}+1$.
If $d_{j}<0$ then $\sum_{i\in S^{-}}\left|\alpha_{i}^{j+1}-\alpha_{i}^{j}\right|>\frac{n}{M}$.
Hence 
\[
\ell_{-}\frac{n}{M}<\sum_{j=1}^{m-1}\sum_{i\in S^{-}}\left|\alpha_{i}^{j+1}-\alpha_{i}^{j}\right|=\sum_{i\in S^{-}}\left|\alpha_{i}^{m}-\alpha_{i}^{1}\right|<\left|S^{-}\right|n
\]
 so we have $\ell_{-}<M\left|S^{-}\right|$. Similarly, if $d_{j}>0$
then
\[
M\sum_{i\in S^{+}\cup\left\{ k+1\right\} }\left|\alpha_{i}^{j+1}-\alpha_{i}^{j}\right|\geq M\sum_{i\in S^{+}}\left|\alpha_{i}^{j+1}-\alpha_{i}^{j}\right|+\alpha_{k+1}^{j+1}-\alpha_{k+1}^{j}>n
\]
and $\ell_{+}<M\left(\left|S^{+}\right|+1\right)$. If $d_{j}=0$,
taking equation \ref{eq:linear relation} modulo $M$ shows that $\alpha_{k+1}^{j+1}-\alpha_{k+1}^{j}=0\left(\bmod M\right)$.
Since $\alpha_{k+1}^{j+1}-\alpha_{k+1}^{j}\neq0$ we have $\alpha_{k+1}^{j+1}-\alpha_{k+1}^{j}\geq M$.
Therefore ${\ell_{0}M\leq\alpha_{k+1}^{m}-\alpha_{k+1}^{1}<n}$ so
$\ell_{0}<\frac{n}{M}$. Putting everything together, we have: 
\[
m=\ell_{-}+\ell_{0}+\ell_{+}+1<M\left(k+1\right)+\frac{n}{M}+1\leq2\sqrt{n\left(k+1\right)}+1+o\left(1\right)
\]
 yielding the upper bound.

In this construction we need $M$ and $n$ to be relatively prime.
For composite $n$ this isn't necessarily the case, and we offer two
remedies: The first is an appeal to number theory to produce $M\approx\sqrt{\frac{n}{k+1}}$
coprime to $n$. It is known \cite{baker2001difference} that for
large $x$, there is always a prime in the interval $\left[x-x^{0.525},x\right]$.
Therefore, we can find three distinct primes in an interval $\left[\sqrt{\frac{n}{k+1}},\left(1+o\left(1\right)\right)\sqrt{\frac{n}{k+1}}\right]$.
At least one of these must be coprime to $n$, since their product
exceeds $n$ for large $n$. This implies that all monotone subsequences
have length $\le\left(2+o\left(1\right)\right)\sqrt{\left(k+1\right)n}$.

The second approach is easy to generalize, as done in the proof of
theorem \ref{thm:erdos-szekeres-high-dim}. Take $M=\left\lfloor \sqrt{\frac{n}{k+1}}\right\rfloor $
as before. Let $g=\gcd\left(M,n\right)$ and define the permutation
$\pi\in S_{n}$ as follows (all values are taken modulo $n$):
\[
\pi=\left(M,2M,\ldots,\frac{n}{g}M,1+M,\ldots,1+\frac{n}{g}M,\ldots,g-1+M,\ldots,g-1+\frac{n}{g}M\right)
\]
Set $f_{A}\left(\alpha_{1},\alpha_{2},\ldots,\alpha_{k}\right)=-\pi\left(\sum_{i=1}^{k}\alpha_{i}\right)$.
Note that if $\gcd\left(M,n\right)=1$, this coincides with the construction
above. Now, if $\alpha^{1},\alpha^{2},\ldots,\alpha^{m}\in G_{A}$
is a monotone subsequence increasing in the last coordinate we bound
it in a similar manner to the calculation above. Using the same notations
we have, for every $1\leq j<m$:
\[
M\sum_{i\in S^{+}}\left|\alpha_{i}^{j+1}-\alpha_{i}^{j}\right|+\alpha_{k+1}^{j+1}-\alpha_{k+1}^{j}=M\sum_{i\in S^{-}}\left|\alpha_{i}^{j+1}-\alpha_{i}^{j}\right|+d_{j}n+r_{j}
\]
where $\left|r_{j}\right|<g\leq M$. If $d_{j}<0$ we have $M\sum_{i\in S^{-}}\left|\alpha_{i}^{j+1}-\alpha_{i}^{j}\right|>n-M$,
so ${\ell_{-}<\frac{n}{n-M}\left|S^{-}\right|}$. Similarly ${\ell_{+}<\frac{n}{n-M}\left(\left|S^{+}\right|+1\right)}$.
If $d_{j}=0$ and $\alpha_{k+1}^{j+1}-\alpha_{k+1}^{j}<M$, then we
have $r_{j}\neq0$. But then, by the definition of $\pi$, we must
have 
\[
{\sum_{i=0}^{k}\left(\alpha_{i}^{j+1}-\alpha_{i}^{j}\right)\geq\frac{n}{g}-1\geq\frac{n}{M}-1}
\]
Therefore $\ell_{0}<\frac{kn}{\frac{n}{M}-1}+\frac{n}{M}=\left(2+o\left(1\right)\right)\sqrt{\left(k+1\right)n}$.
So 
\[
{m<\ell_{-}+\ell_{0}+\ell_{+}+1\leq\left(3+o\left(1\right)\right)\sqrt{\left(k+1\right)n}}
\]

\end{proof}
Most proofs of theorem \ref{thm:balanced est} actually yield a more
general statement:
\begin{thm}
\label{thm:erdos szkeres}Let $r,s$ and $n$ be positive integers
with $rs<n$. Then every permutation in $S_{n}$ contains either an
increasing subsequence of length $r+1$, or a decreasing subsequence
of length $s+1$. The bound is tight: if $rs\ge n$ then there is
a permutation in $S_{n}$ with neither an increasing subsequence of
length $r+1$ nor a decreasing subsequence of length $s+1$.
\end{thm}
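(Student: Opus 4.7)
The plan is to give the classical Erd\H{o}s--Szekeres argument (due to Seidenberg) for the positive part, together with an explicit block construction for tightness.

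For the positive part, I would associate to each position $i$ of a permutation $\pi \in S_n$ a pair $(a_i, b_i)$, where $a_i$ is the length of the longest increasing subsequence of $\pi$ ending at position $i$ and $b_i$ the length of the longest decreasing one. The key observation is that the assignment $i \mapsto (a_i, b_i)$ is injective: for $i < j$ either $\pi_i < \pi_j$, in which case appending $\pi_j$ to an optimal increasing subsequence ending at $i$ gives $a_j \ge a_i + 1$, or $\pi_i > \pi_j$, giving $b_j \ge b_i + 1$. Now, if $\pi$ contained neither an increasing subsequence of length $r+1$ nor a decreasing one of length $s+1$, then $(a_i, b_i) \in [r] \times [s]$ for every $i$, yielding $n \le rs$ and contradicting $rs < n$.

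For the tightness half, assuming $rs \ge n$, I would construct $\pi$ by partitioning the positions $[n]$ into $t \le r$ consecutive blocks of sizes $s_1, \ldots, s_t$, each $s_i \le s$; this is possible since $rs \ge n$. The $j$-th block receives the consecutive integers in the range $\left(\sum_{i<j} s_i,\ \sum_{i\le j} s_i\right]$ listed in decreasing order. Any decreasing subsequence must lie inside a single block, since values strictly increase when passing from an earlier block to a later one, so it has length at most $s$; any increasing subsequence contains at most one entry per block, because within a block values decrease, so it has length at most $t \le r$. Thus $\pi$ avoids both forbidden lengths.

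The main obstacle here is not conceptual---the pigeonhole step is entirely standard and the construction is elementary. The only point requiring a little care is the tightness construction for $n$ that is not of the form $rs$; this is handled immediately by the division algorithm, which produces a partition of $[n]$ into at most $r$ parts each of size at most $s$ whenever $rs \ge n$.
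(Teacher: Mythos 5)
Your proof is correct. The paper does not actually prove this theorem --- it is stated as a classical fact preceding the high-dimensional generalization --- so there is no in-paper argument to compare against directly. Your positive direction is the standard Seidenberg pigeonhole argument: the map $i \mapsto (a_i,b_i)$ is injective, so if every increasing subsequence has length at most $r$ and every decreasing one has length at most $s$, the image lies in $[r]\times[s]$ and $n\le rs$. The tightness construction, with $t\le r$ blocks each of size at most $s$, values decreasing within a block and all values in a later block larger than all values in an earlier one, is the standard one; your remark that the division algorithm supplies the block sizes when $n$ is not a multiple of $s$ (writing $n=qs+\rho$ gives $q<r$ when $\rho>0$, hence $q+1\le r$ blocks) closes the only small gap.

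It is worth noting that the machinery the paper actually deploys for the high-dimensional analogue (Theorem \ref{thm:erdos-szekeres-high-dim}) is different: Lemma \ref{lem:dilworth consequence}, the Dilworth/Mirsky inequality $h(P)\cdot w(P)\ge|P|$. Specialized to dimension one, that route also proves the positive half of the present statement: order the $n$ points $(i,\pi_i)$ by the coordinatewise product order; if there is no chain of length $r+1$ then $h\le r$, so $w\ge n/r>s$, and an antichain of size $s+1$ is exactly a decreasing subsequence of that length. Your pigeonhole proof is more elementary and self-contained, whereas the poset formulation is the one that generalizes cleanly to the paper's higher-dimensional setting, where the crux becomes bounding antichains common to two partial orders rather than counting a pair of statistics.
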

It is possible to extend theorem \ref{thm:balanced h.d. est} in a
similar fashion. To this end we refine our notion of monotonicity.
In dimension one we distinguish between ascending and descending subsequences,
and we need something similar in higher dimensions.
\begin{defn}
\label{def:monotone seq}A vector $\vec{c}\in\left\{ 0,1\right\} ^{k+1}$
induces a partial order $x<_{\vec{c}}y$ on $\mathbb{R}^{k+1}$ as
follows: For $1\leq i\leq k+1$ if $c_{i}=1$, then $x_{i}<y_{i}$
and if $c_{i}=0$, then $y_{i}<x_{i}$.\end{defn}
\begin{thm}
\label{thm:erdos-szekeres-high-dim}Let $\vec{c},\vec{d}\in\left\{ 0,1\right\} ^{k+1}$
differ in exactly one coordinate. Let $rs<\frac{n}{3(k-1)}$. Then
every $A\in L_{n}^{k}$, contains either a $<_{\vec{c}}$-monotone
subsequence of length $r$ or a $<_{\vec{d}}$-monotone subsequence
of length $s$.

The bound is tight up to the multiplicative constants: If $r,s\geq9\left(k+10\right)$
and $rs>5kn$, then there exists $A\in L_{n}^{k}$ with no $<_{\vec{c}}$-monotone
subsequence of length $r$ nor a $<_{\vec{d}}$-monotone subsequence
of length $s$.\end{thm}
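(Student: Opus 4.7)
The plan is to adapt both halves of the argument for Theorem~\ref{thm:balanced h.d. est} to the asymmetric setting. In both parts, I would first invoke the symmetries of Remark~\ref{rem:Latin symmetries}: apply a coordinate permutation to move the index on which $\vec{c}$ and $\vec{d}$ disagree into position $k+1$, and then coordinate reversals on the coordinates where $\vec{c}_i=0$ (and if necessary on the last coordinate), to reduce to $\vec{c}=(1,\dots,1)$ and $\vec{d}=(1,\dots,1,0)$, so that $<_{\vec{c}}$ coincides with $<_1$ and $<_{\vec{d}}$ with $<_2$. If this reduction swaps the roles of $\vec{c}$ and $\vec{d}$, it also swaps $r$ and $s$, which leaves the statement unchanged.

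For the lower bound I would argue by contradiction. Assume $A\in L_n^k$ has neither a $<_1$-chain of length $r$ nor a $<_2$-chain of length $s$. Applying Mirsky's theorem to $(G_A,<_1)$ partitions $G_A$ into at most $r-1$ $<_1$-antichains, so one such antichain $A_1$ satisfies $|A_1|\ge n^k/(r-1)$. A second application of Mirsky to $(A_1,<_2)$ produces a $<_2$-antichain $A_2\subseteq A_1$ with $|A_2|\ge n^k/((r-1)(s-1))$. Since $A_2$ is simultaneously a $<_1$- and $<_2$-antichain, the argument in the proof of Theorem~\ref{thm:balanced h.d. est} shows that the projection of $A_2$ onto its first $k$ coordinates satisfies the hypothesis of Lemma~\ref{lem:hypercube anti chains}, yielding $|A_2|\le 3(k-1)n^{k-1}$. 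Together these give $(r-1)(s-1)\ge n/(3(k-1))$, which contradicts $rs<n/(3(k-1))$ since $(r-1)(s-1)<rs$.

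For tightness I would generalize the second construction used in Theorem~\ref{thm:balanced h.d. est}: set $f_A(\alpha_1,\dots,\alpha_k)=-\pi(\sum_{i=1}^{k}\alpha_i)$, with the same permutation $\pi$ depending on a parameter $M$ in order to handle the composite-$n$ case $\gcd(M,n)>1$. The key observation is that this construction produces \emph{asymmetric} bounds on the two kinds of monotone subsequences: a $<_{\vec{c}}$-monotone (all coordinates strictly increasing) subsequence forces every step to contribute at least $n-O(M)$ to a telescoping sum of size at most $(Mk+1)n$, limiting its length to $O(kM)$; while a $<_{\vec{d}}$-monotone subsequence admits the three-case analysis ($d_j>0$, $d_j=0$, $d_j<0$) of equation~\ref{eq:linear relation}, yielding length $O(kM+n/M)$. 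Assuming $r\le s$, I would choose $M$ so that $kM$ is comparable to $r$ while $n/M$ is comparable to $s$; the hypothesis $rs>5kn$ is what makes the admissible range of $M$ nonempty, and $r,s\ge 9(k+10)$ provides the extra slack needed to absorb the lower-order additive terms.

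The hardest part will be carefully propagating the residue $r_j$ with $|r_j|<\gcd(M,n)\le M$ that appears in the analogue of equation~\ref{eq:linear relation} for composite $n$. This residue costs constant factors in the telescoping estimates in both directions, and the explicit thresholds $5kn$ and $9(k+10)$ should emerge from tracking these constants through the case analysis. The conceptual content is already present in Theorem~\ref{thm:balanced h.d. est}; the main work here will be careful bookkeeping to ensure the two direction-specific bounds hold simultaneously with the asymmetric choice of $M$.
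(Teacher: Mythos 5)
Your proposal is correct and follows essentially the same route as the paper: the same symmetry reduction to $<_1,<_2$, the lower bound via two applications of Dilworth/Mirsky together with Lemma~\ref{lem:hypercube anti chains}, and tightness via the modular construction $f_A(\alpha)=-\pi(\sum\alpha_i)$ with $M$ chosen proportional to $\min(r,s)/k$ and an analysis of the integers $d_j$ and residues $r_j$ from equation~\ref{eq:linear relation}.

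The one place you diverge is a bookkeeping choice in the tightness half, worth flagging because it changes which estimate carries the burden. The paper takes $M=\lfloor s/(2k)\rfloor$ under the assumption $r\ge s$ and uses the looser case-split bound $n/M+kM+1$ for $<_1$ (the all-increasing order) while obtaining the clean bound $kM+1$ for $<_2$ via the observation that the quantities $s_j=M\sum_i\alpha_i^j$ must land in distinct length-$n$ intervals. You instead assume $r\le s$ and claim the sharper bound $O(kM)$ for $<_1$, which is in fact correct: for a $<_1$-chain one has $M\sum_i\Delta\alpha_i+\Delta\alpha_{k+1}=d_jn+r_j$ with the left side at least $Mk+1>|r_j|$, forcing $d_j\ge1$ at every step, and summing gives $m=O(kM)$. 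You then use the weaker three-case estimate $O(kM+n/M)$ for $<_2$. Both bound assignments are valid and both yield the theorem; yours is a genuine (if slight) tightening of the $<_1$ analysis, compensated by a looser $<_2$ estimate. You have not carried the constants through to verify that $rs>5kn$ and $r,s\ge9(k+10)$ suffice with your choice of $M$, but a quick check (taking $kM\approx r/2$, so $n/M\lesssim 2kn/r<2s/5$ and $kM\le s/2$) confirms that they do, so this is only unfinished arithmetic rather than a gap.
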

\begin{proof}
Using the symmetries from remark \ref{rem:Latin symmetries} we may
assume w.l.o.g.\ that $<_{\vec{c}}=<_{1}$ and $<_{\vec{d}}=<_{2}$. 

The proof of the lower bound is similar to the proof of the lower
bound in theorem \ref{thm:balanced h.d. est}, and we provide only
a sketch. Lemma \ref{lem:hypercube anti chains} gives an upper bound
of $3\left(k-1\right)n^{k-1}$ on the size of any anti-chain under
both $<_{1}$ and $<_{2}$. Two applications of lemma \ref{lem:dilworth consequence}
yield the lower bound.

For the upper bound, assume w.l.o.g.\ that $r\geq s$. We construct
$\pi\in S_{n}$ and $A\in L_{n}^{k}$ as before, with $M=\left\lfloor \frac{s}{2k}\right\rfloor $.
Let $\alpha^{1},\alpha^{2},\ldots,\alpha^{m}\in G_{A}$ be a $<_{1}$-monotone
subsequence. Then the sequence is increasing in every coordinate.
For all $j$, if $\alpha_{k+1}^{j+1}-\alpha_{k}^{j}<M$ then $\sum_{i=1}^{k}\left(\alpha_{i}^{j+1}-\alpha_{i}^{j}\right)\geq\frac{n}{g}\geq\frac{n}{M}$.
Thus 
\[
m\leq\frac{n}{M}+\frac{kn}{\frac{n}{M}}+1=\frac{n}{M}+kM+1\leq\frac{2kn}{s}\left(1+\frac{2k}{s}\right)+\frac{s}{2}+1
\]
Using the assumptions that $\frac{r}{5k}>\frac{n}{s}$ and $r\geq s\geq9\left(k+10\right)$,
we have:
\[
m\leq r\left(\frac{2}{5}\left(1+\frac{2}{9}\right)+\frac{1}{2}+\frac{1}{r}\right)\leq r
\]

Now, let $\alpha^{1},\alpha^{2},\ldots,\alpha^{m}\in G_{A}$ be a
$<_{2}$-monotone subsequence. For $1\leq j\leq m$ define $s_{j}=M\sum_{i=1}^{k}\alpha_{i}^{j}$.
This is an increasing sequence, and $s_{j+1}-s_{j}\geq M$ for all
$j$. By definition of $A$, $\alpha_{k+1}^{j}=s_{j}\left(\bmod n\right)+r_{j}$
for some $0\leq r_{j}<M$. Because $\alpha_{k+1}^{1},\alpha_{k+1}^{2},\ldots,\alpha_{k+1}^{m}$
is decreasing, if for some $j$, $s_{j}$ and $s_{j+1}$ fall in the
same interval of the form $\left[dn+1,\left(d+1\right)n\right]$ (for
$d\in\mathbb{Z}$), then $s_{j}+r_{j}>s_{j+1}\implies s_{j+1}-s_{j}<r_{j}<M$,
a contradiction. Therefore the $s_{j}$'s fall into distinct intervals
of the form $\left[dn+1,\left(d+1\right)n\right]$. But for every
$j$, $0<s_{j}\leq Mkn$. Since $\left[0,Mkn\right]$ contains only
$\left\lceil \frac{Mkn}{n}\right\rceil \leq Mk+1$ intervals of length
$n$, we have $m\leq Mk+1\leq\frac{s}{2}+1<s$.
\end{proof}

\section{\label{sec:random}Monotone Subsequences in Random High-Dimensional
Permutations}

As mentioned in the introduction, the longest monotone subsequence
of a random permutation is typically of length $2\sqrt{n}$. In view
of the Erd\H{o}s-Szekres theorem this means that the random case and
the worst case are of the same order of magnitude and differ by only
a constant factor. In higher dimensions this is no longer the case.
The longest monotone subsequence of a typical element in $L_{n}^{k}$
has length $\Theta_{k}\left(n^{\frac{k}{k+1}}\right)$.

We define the random variable $H_{n}^{k}$ - the length of the longest
monotone subsequence in a uniformly random element of $L_{n}^{k}$,
and prove:
\begin{thm}
\label{thm:random monotone}For every $k\in\mathbb{N}$:
\begin{enumerate}
\item For every $\varepsilon>0$, ${H_{n}^{k}n^{-\frac{k}{k+1}}\in\left[\frac{1}{k+2},e+\varepsilon\right]}$
asymptotically almost surely.
\item $1-\frac{\ln k+1}{k+1}-o\left(1\right)\leq E\left[H_{n}^{k}n^{-\frac{k}{k+1}}\right]\leq e+o\left(1\right)$.
\end{enumerate}
\end{thm}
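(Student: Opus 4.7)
The plan is to prove the a.a.s.\ upper and lower bounds separately, with the expectation statements then following easily.

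For the upper bound, I apply the first moment method to $Y_m$, the number of length-$m$ monotone subsequences in $A$. By the symmetries of Remark \ref{rem:Latin symmetries} it suffices to count $<_1$-monotone chains and multiply by $2^{k+1}$, and the number of length-$m$ candidate $<_1$-chains in $[n]^{k+1}$ is exactly $\binom{n}{m}^{k+1}$. The crucial probabilistic input is the estimate
\[
\mathbb{P}\bigl[\alpha^1,\ldots,\alpha^m \in G_A\bigr] \le (1+o(1))^m n^{-m}
\]
for any fixed length-$m$ monotone chain, whose coordinates are automatically pairwise distinct in every position. Combining with Stirling yields $E[Y_m] \le 2^{k+1}\bigl((1+o(1))e/c\bigr)^{(k+1)m}$ when $m=cn^{k/(k+1)}$, which tends to $0$ for $c>e$. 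Markov gives the a.a.s.\ upper bound, and since this tail probability is super-polynomially small while $H_n^k\le n$ deterministically, we also obtain $E[H_n^k]\le (e+o(1))n^{k/(k+1)}$.

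For the lower bound, I partition each axis of $[n]^{k+1}$ into $m$ consecutive intervals of length $n/m$, yielding $m^{k+1}$ axis-aligned boxes of volume $(n/m)^{k+1}$. The $m$ diagonal boxes $B_1,\ldots,B_m$ satisfy: any $G_A$-point in $B_i$ is coordinate-wise strictly less than any $G_A$-point in $B_j$ for $i<j$. Thus $H_n^k\ge Z$, where $Z$ is the number of non-empty diagonal boxes. Each $B_i$ has expected $G_A$-mass $\lambda := n^k/m^{k+1}$, and a Poisson-type estimate (valid for random $k$-dimensional permutations with small boxes) gives empty probability $e^{-\lambda}+o(1)$, so $E[Z] = m(1-e^{-\lambda})-o(m)$. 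For the expectation lower bound, take $\lambda = e(k+1)$, so that $m = n^{k/(k+1)}/(e(k+1))^{1/(k+1)} = \bigl(1-\frac{\ln k+1}{k+1}+o(1)\bigr)n^{k/(k+1)}$; since $H_n^k\ge Z$ pointwise, $E[H_n^k]\ge E[Z]\ge \bigl(1-\frac{\ln k+1}{k+1}-o(1)\bigr)n^{k/(k+1)}$, with no concentration needed. For the a.a.s.\ lower bound, take $\lambda=(k+2)^{k+1}$ and $m$ slightly exceeding $n^{k/(k+1)}/(k+2)$, so that $e^{-\lambda}$ is a negligible constant; a second-moment (Chebyshev) estimate on $Z$, using pairwise covariance bounds for the box occupancies, shows $Z\ge n^{k/(k+1)}/(k+2)$ a.a.s.

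The primary obstacle is proving the probability estimate above for random $A\in L_n^k$ in the regime $m = O(n^{k/(k+1)})$, together with the analogous pairwise joint estimates required for the covariance bound in the lower-bound step. Joint distributions of many entries of a uniformly random $k$-dimensional permutation are subtle: one route is a switching argument showing that fixing $m$ entries in general position perturbs the distribution only negligibly; another is a permanent-style bound via Br\'egman--Minc or van der Waerden--Egorychev--Falikman and their higher-dimensional analogues. Both routes exploit that the $m$ cells of a monotone chain occupy $m$ pairwise disjoint lines, decoupling the $m$ "line constraints" of the hypercube.
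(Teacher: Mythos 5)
Your high-level plan (first-moment for the upper bound, diagonal boxes for the lower bound) matches the paper's, but three of your key steps are either unproven claims or routes that would fail, and the paper has much cleaner alternatives.

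\textbf{The ``primary obstacle'' is actually easy.} You flag the estimate $\Pr[\alpha^1,\ldots,\alpha^m\in G_A]\le(1+o(1))^m n^{-m}$ as the crucial difficulty and suggest switching arguments or permanent bounds. The paper does something far simpler: it uses the fact (Lemma~\ref{lem:square and perm}) that composing a uniform $A\in L_n^k$ with a uniform $\pi\in S_n$ acting on one coordinate again yields a uniform element of $L_n^k$. Applying $\pi$ to the last coordinate, the event $f_{\pi(A)}(\alpha_1)<\cdots<f_{\pi(A)}(\alpha_m)$ has probability at most $1/m!$ for any $m$ distinct positions (Corollary~\ref{lem:prob increasing}), with no need for a pointwise cell-probability estimate at all. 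One then sums over the $\binom{n}{m}^k$ choices of increasing position sequences. Your version would also go through with the same symmetry trick (applied to the first coordinate: the event $\pi(A)_{\alpha^j}=1$ for all $j$ forces $\pi$ to realize $m$ prescribed values, giving probability $\le(n-m)!/n!$), so you haven't identified an obstacle so much as overlooked the tool.

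\textbf{The Poisson estimate for box occupancy is neither proven nor needed.} You assert that a diagonal box of side $n/m$ is empty with probability $e^{-\lambda}+o(1)$ where $\lambda=n^k/m^{k+1}$. That is a genuine distributional statement about random high-dimensional permutations with no obvious proof and considerably more content than the theorem requires. The paper instead applies the second moment method (Paley--Zygmund) to $X_i=\sum_{\alpha\in D_i}A_\alpha$, obtaining only the one-sided bound $\Pr[Z_i=1]\ge\frac{C^{k+1}}{C^{k+1}+1}-o(1)$ (Lemma~\ref{lem:home cube prob}), which already suffices for the expectation lower bound. The pairwise joint probabilities needed in $E[X_i^2]$ are handled by the same $S_n$-action.

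\textbf{Chebyshev does not close the a.a.s.\ lower bound.} With the available estimates --- a lower bound on $E[Z_i]$ of the form $\frac{C^{k+1}}{C^{k+1}+1}-o(1)$ and an upper bound on $E[Z_iZ_j]$ of roughly $C^{2(k+1)}$ --- the covariance $\mathrm{Cov}(Z_i,Z_j)$ cannot be bounded by $o(1)$: the gap between $C^{k+1}$ and $C^{k+1}/(C^{k+1}+1)$ is a constant, so $\mathrm{Var}(Z)=\Theta(N^2)=\Theta(E[Z]^2)$ and Chebyshev gives nothing. The paper instead proves a multiplicative bound $\Pr[\prod_{i\in S}Z_i=1]\le\alpha^{|S|}$ (Lemma~\ref{lem:multiplicative prob}, again via the $S_n$-action) and feeds it into the Impagliazzo--Kabanets generalized Chernoff bound (Theorem~\ref{thm:generalized chernoff}) to control the \emph{upper} tail of $Y_n=n^{-k/(k+1)}\sum Z_i$. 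It then combines that upper-tail control with the expectation lower bound and the deterministic cap $Y_n\le 1/C+o(1)$: if $Y_n$ were too often small, its expectation could not reach $C^{k+1}-o(1)$. Choosing $C$ to solve $C(1+C^{k+1})=1$ yields the stated constant $1/(k+2)$. This indirect ``expectation squeeze'' is exactly what sidesteps the failure of Chebyshev, and it is the piece your proposal is missing.
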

There are $2^{k+1}$ distinct order types of monotone subsequences,
indexed by binary vectors $\vec{c}\in\left\{ 0,1\right\} ^{k+1}$.
By permuting coordinates we see that the distribution of the longest
$<_{\vec{c}}$-monotone subsequence in a random element of $L_{n}^{k}$
is independent of $\vec{c}$. Thus it suffices to prove theorem \ref{thm:random monotone}
for $<_{\left(1,1,\ldots,1\right)}$-monotone subsequences. For brevity
of notation we write $<$ in place of $<_{\left(1,1,\ldots,1\right)}$.

The following lemma is useful in dealing with uniformly random elements
of $L_{n}^{k}$:
\begin{lem}
\label{lem:square and perm}Given $A\in L_{n}^{k}$ and $\pi\in S_{n}$,
let $\pi\left(A\right)\in L_{n}^{k}$ be obtained by permuting the
first coordinate of $G_{A}$ according to $\pi$. If $A$ and $\pi$
are chosen independently uniformly at random, then $\pi\left(A\right)$
is uniformly distributed in $L_{n}^{k}$.\end{lem}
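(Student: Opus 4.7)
The plan is to reduce the statement to two facts: that $S_n$ acts on $L_n^k$ by permuting the values of a single coordinate (Remark \ref{rem:Latin symmetries}(\ref{enu:permutation of coordinate values})), and that pushing a uniform distribution through a bijection yields a uniform distribution. In particular the randomness of $\pi$ will turn out to be unnecessary --- the argument will show that $\pi(A)$ is already uniform conditional on any fixed value of $\pi$.

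First I would verify that $A \mapsto \pi(A)$ is a well-defined self-map of $L_n^k$. A line of $\pi(A)$ in a direction $j \neq 1$ is obtained from a line of $A$ (with first coordinate $\pi^{-1}(a_1)$ fixed) by reading off the same values, so it contains a unique $1$; a line in direction $1$ is a relabelling by $\pi$ of the corresponding line of $A$, so it also contains exactly one $1$. Combined with the identities $(\pi\sigma)(A) = \pi(\sigma(A))$ and $\mathrm{id}(A) = A$, this shows that for every $\pi \in S_n$ the map $A \mapsto \pi(A)$ is a bijection of $L_n^k$ with inverse $B \mapsto \pi^{-1}(B)$.

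I would then conclude by conditioning on $\pi$: for any target $B \in L_n^k$ and any $\sigma \in S_n$, the event $\sigma(A) = B$ coincides with $A = \sigma^{-1}(B)$, which has probability $1/|L_n^k|$ because $A$ is uniform. Averaging over $\pi$ (independent of $A$) gives $\Pr[\pi(A) = B] = 1/|L_n^k|$, independent of $B$, as desired.

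There is no real obstacle here: the content of the lemma is essentially the observation that the uniform distribution on $L_n^k$ is $S_n$-invariant under the first-coordinate action, so an independent random symmetrization preserves uniformity. Its value presumably lies in its downstream use in Section \ref{sec:random}, where coupling a uniformly random $A$ with an independent random permutation of its first coordinate lets one exploit the classical Erd\H{o}s--Szekeres statistics of $\pi$.
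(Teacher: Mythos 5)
Your proof is correct, and it actually takes a genuinely different and more elementary route than the paper's. You observe that each fixed $\sigma \in S_n$ acts on $L_n^k$ by a bijection, so $\sigma(A)$ is already uniform when $A$ is; averaging over $\sigma$ then trivially preserves uniformity, and in particular the randomness of $\pi$ plays no essential role. The paper instead reduces the claim to the assertion that the $S_n$-action is \emph{free}, and proves freeness directly: if $\pi(A)=A$, then for each $i$ the unique $x$ with $A_{(i,x,1,\ldots,1)}=1$ also satisfies $A_{(\pi(i),x,1,\ldots,1)}=1$, forcing $\pi(i)=i$. Freeness does imply the lemma (it makes all orbits have size $n!$, so $\pi\mapsto\pi(A)$ is a bijection onto the orbit of $A$, and the orbit decomposition of $L_n^k$ is equitable), but as your argument shows, it is strictly more than what is needed: any group action by bijections pushes uniform $\times$ uniform forward to uniform. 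So your proof is shorter and relies on weaker hypotheses; the paper's proof is a slightly heavier argument whose by-product (freeness of the action) is a structural fact about $L_n^k$ that might be of independent interest but is not actually exploited elsewhere.
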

\begin{proof}
It's enough to show that the $S_{n}$-action on $L_{n}^{k}$ described
in the statement is free. Indeed, assume $\pi\left(A\right)=A$. Let
$i\in\left[n\right]$. There exists a unique $x$ s.t.\ $A_{\left(i,x,1,\ldots,1\right)}=1$.
Since $\pi\left(A\right)=A$ we have $A_{\left(\pi\left(i\right),x,1,\ldots,1\right)}=1$.
But then $\pi\left(i\right)=i$ is the unique $y$ s.t.\ $A_{\left(y,x,1,\ldots,1\right)}=1$.
Thus $\pi$ is the identity.
\end{proof}
We have arbitrarily chosen for $\pi$ to act on the first coordinate,
but occasionally (e.g., in the next lemma) we have it act on other
coordinates, as needed.

A useful corollary of this lemma follows:
\begin{cor}
\label{lem:prob increasing}Let $\alpha_{1},\alpha_{2},\ldots,\alpha_{m}\in\left[n\right]^{k}$
be distinct positions. For a uniformly drawn $A\in L_{n}^{k}$, 
\[
\Pr\left[f_{A}\left(\alpha_{1}\right)<f_{A}\left(\alpha_{2}\right)<\ldots<f_{A}\left(\alpha_{m}\right)\right]\le\frac{1}{m!}
\]

\end{cor}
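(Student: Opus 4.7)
The plan is to reduce the statement to a uniform random relabeling of values via Lemma~\ref{lem:square and perm}, applied with the $S_n$-action on the \emph{last} coordinate rather than the first (as sanctioned by the remark immediately following that lemma). Concretely, I would write $A=\pi(A')$, where $A'\in L_n^k$ is uniformly random and $\pi\in S_n$ is an independent uniformly random permutation acting on the $(k+1)$-st coordinate; by the lemma, $A$ is then uniformly distributed in $L_n^k$. Under this action the defining function transforms as $f_A(\alpha)=\pi(f_{A'}(\alpha))$ for every $\alpha\in[n]^k$, so the event in question becomes $\pi(t_1)<\pi(t_2)<\cdots<\pi(t_m)$, where $t_i:=f_{A'}(\alpha_i)$.

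Next I would condition on $A'$, which fixes the list $t_1,\ldots,t_m\in[n]$, and split into two cases. If some $t_i=t_j$ with $i\neq j$, then no permutation can produce a strictly increasing sequence, and the conditional probability is $0$. Otherwise the $t_i$ are all distinct, and a uniform $\pi$ sends them to a uniformly random ordered $m$-tuple of distinct elements of $[n]$; exactly a $1/m!$ fraction of such tuples are strictly increasing, so the conditional probability equals $1/m!$. Averaging over $A'$ yields the claimed bound.

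I expect no serious obstacle here; the only points requiring care are choosing the right coordinate for the $S_n$-action (so that $\pi$ permutes values, not positions), and treating the collision case separately. The latter also explains why the inequality is strict in general: for $k\ge 2$, distinct positions $\alpha_1,\ldots,\alpha_m$ may share a common $f_A$-value with positive probability, so the distinctness event does not have probability one.
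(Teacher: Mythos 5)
Your argument is correct and is precisely the (unwritten) proof the paper intends: apply Lemma~\ref{lem:square and perm} with the $S_n$-action on the last coordinate so that $f_A=\pi\circ f_{A'}$, condition on $A'$, and use the fact that a uniform $\pi$ puts distinct $t_i$ in a uniformly random relative order while a collision kills the event. Your closing observation about why the inequality can be strict for $k\ge 2$ is also accurate.
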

We first prove the upper bounds in theorem \ref{thm:random monotone}.
\begin{prop}
~
\begin{enumerate}
\item For every $\varepsilon>0$ there holds $\Pr\left[H_{n}^{k}n^{-\frac{k}{k+1}}>e+\varepsilon\right]=o(1)$.
\item $E\left[H_{n}^{k}\right]n^{-\frac{k}{k+1}}\le e+o\left(1\right)$.
\end{enumerate}
\end{prop}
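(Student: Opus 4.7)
The plan is a straightforward first-moment calculation, in the spirit of the classical union-bound proof that the longest increasing subsequence of a random permutation in $S_n$ has length at most $(e+o(1))\sqrt{n}$. The corollary to lemma \ref{lem:square and perm} supplies exactly the probability estimate needed.

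Let $X_m$ denote the number of $<$-monotone subsequences of length $m$ in the random $A\in L_{n}^{k}$. Such a subsequence is specified by a tuple of positions $\alpha^{1}<\alpha^{2}<\ldots<\alpha^{m}$ in $[n]^{k}$ (strictly increasing coordinatewise), together with the requirement that $f_{A}(\alpha^{1})<f_{A}(\alpha^{2})<\ldots<f_{A}(\alpha^{m})$. The number of such increasing position tuples is $\binom{n}{m}^{k}$, since one may independently choose a strictly increasing length-$m$ sequence in each of the first $k$ coordinates. Corollary \ref{lem:prob increasing} bounds the conditional probability that $f_{A}$ is also strictly increasing on any given tuple by $1/m!$, so applying $m!\geq (m/e)^{m}$ yields
\[
E[X_{m}]\;\leq\;\binom{n}{m}^{k}\frac{1}{m!}\;\leq\;\frac{n^{mk}}{(m!)^{k+1}}\;\leq\;\left(\frac{e^{k+1}n^{k}}{m^{k+1}}\right)^{m}.
\]

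For the a.a.s.\ bound, fix $\varepsilon>0$ and set $m=\lceil(e+\varepsilon)n^{k/(k+1)}\rceil$. Then the base of the exponential is at most $\left(e/(e+\varepsilon)\right)^{k+1}$, a constant strictly less than $1$, so $E[X_{m}]=o(1)$. Since $\{H_{n}^{k}\geq m\}=\{X_{m}\geq 1\}$, Markov's inequality gives $\Pr[H_{n}^{k}\geq m]=o(1)$, which is the first claim.

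For the expectation bound, write $E[H_{n}^{k}]=\sum_{m\geq 1}\Pr[H_{n}^{k}\geq m]$ and split the sum at $m_{0}=\lceil(e+\varepsilon)n^{k/(k+1)}\rceil$. For $m\geq m_{0}$ the ratio $e^{k+1}n^{k}/m^{k+1}$ is bounded by the constant $c:=(e/(e+\varepsilon))^{k+1}<1$, hence $\Pr[H_{n}^{k}\geq m]\leq E[X_{m}]\leq c^{m}$, and the tail contributes $\sum_{m>m_{0}}c^{m}=O(1)$. Thus $E[H_{n}^{k}]\leq m_{0}+O(1)=(e+\varepsilon)n^{k/(k+1)}+O(1)$, and letting $\varepsilon\to 0$ yields $E[H_{n}^{k}]\leq (e+o(1))n^{k/(k+1)}$. (The reduction to a single order type $<_{(1,\ldots,1)}$ costs only a constant factor of $2^{k+1}$ in the union bound, which is absorbed into the tail estimate.)

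There is no real obstacle here: the only substantive input is corollary \ref{lem:prob increasing}, and the rest is Stirling. The choice of normalization $n^{k/(k+1)}$ is forced by balancing the $\binom{n}{m}^{k}$ and $m!$ factors, which explains the exponent appearing in theorem \ref{thm:random monotone}, and the constant $e$ emerges precisely because $\binom{n}{m}^{k}/m!$ is dominated by a single factor $(e^{k+1}n^{k}/m^{k+1})^{m}$.
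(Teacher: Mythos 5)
Your proof is correct and takes essentially the same approach as the paper: the identical first-moment count $\binom{n}{m}^k/m! \le (e^{k+1}n^k/m^{k+1})^m$ via Corollary \ref{lem:prob increasing}, followed by Markov. The only cosmetic difference is in the expectation bound, where the paper uses the crude two-case estimate $E[H_n^k]\le m\Pr[H_n^k<m]+n\Pr[H_n^k\geq m]$ (exploiting the trivial bound $H_n^k\le n$ and the super-polynomially small tail), whereas you sum $\Pr[H_n^k\geq m]$ and bound the tail by a geometric series; both are valid and of equal difficulty, and your explicit remark about the factor $2^{k+1}$ from the reduction to one order type is a small clarity gain over the paper's implicit handling of it.
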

\begin{proof}
We bound the expected number of length-$m$ monotone subsequences
in a random $k$-dimensional permutation. For every increasing sequence
of positions $\alpha=\alpha_{1},\alpha_{2},\ldots,\alpha_{m}\in\left[n\right]^{k}$
and $A\in L_{n}^{k}$ we define 
\[
X_{\alpha}\left(A\right)=\begin{cases}
1 & f_{A}\left(\alpha_{1}\right)<f_{A}\left(\alpha_{2}\right)<\ldots<f_{A}\left(\alpha_{m}\right)\\
0 & otherwise
\end{cases}
\]
By lemma \ref{lem:prob increasing} $E\left[X_{\alpha}\left(A\right)\right]=\Pr\left[X_{\alpha}\left(A\right)=1\right]\leq\frac{1}{m!}$
for a uniform $A\in L_{n}^{k}$. Let $S$ be the set of all length-$m$
increasing sequences of positions in $\left[n\right]^{k}$. Clearly,
$|S|=\left(\begin{array}{c}
n\\
m
\end{array}\right)^{k}$so by linearity of expectation:
\[
\Pr\left[H_{n}^{k}\geq m\right]=\Pr\left[\sum_{\alpha\in S}X_{\alpha}\left(A\right)>0\right]\leq E\left[\sum_{\alpha\in S}X_{\alpha}\left(A\right)\right]\leq\frac{\left(\begin{array}{c}
n\\
m
\end{array}\right)^{k}}{m!}\leq\left(\left(\frac{e}{m}\right)^{k+1}n^{k}\right)^{m}
\]

Let $c=e+\varepsilon$ for some $\varepsilon>0$, and let $m=cn^{\frac{k}{k+1}}$.
Then: 
\[
\Pr\left[H_{n}^{k}n^{-\frac{k}{k+1}}>c\right]=\Pr\left[H_{n}^{k}\geq m\right]\leq\left(\frac{e}{c}\right)^{\left(k+1\right)cn^{\frac{k}{k+1}}}=o(1)
\]
proving the first claim in the proposition. Further:
\[
E\left[H_{n}^{k}\right]n^{-\frac{k}{k+1}}\leq\left(m\Pr\left[H_{n}^{k}<m\right]+n\Pr\left[H_{n}^{k}\geq m\right]\right)n^{-\frac{k}{k+1}}\leq c+n^{\frac{1}{k+1}}\left(\frac{e}{c}\right)^{\left(k+1\right)cn^{\frac{k}{k+1}}}=c+o(1)
\]
which proves the second claim.
\end{proof}
The proof of the lower bounds is more intricate. Fix some $C>0$ and
let $m=Cn^{\frac{1}{k+1}}$. For $1\leq i\leq\left\lfloor \frac{n}{m}\right\rfloor $,
let $D_{i}=\left[\left(i-1\right)m+1,im\right]^{k+1}$ be the diagonal
subcubes of $\left[n\right]^{k+1}$. For a uniformly random $A\in L_{n}^{k}$
let $Z_{i}$ be the indicator random variable of the event that $A$
is not all zero on $D_{i}$. Clearly, $H_{n}^{k}\geq\sum_{1\leq i\leq\frac{n}{m}}Z_{i}$,
since $\alpha<\beta$ if $\alpha\in D_{i},\beta\in D_{j}$, and $i<j$.
Indeed we prove lower bounds on $H_{n}^{k}$ by bounding $\sum_{1\leq i\leq\frac{n}{m}}Z_{i}$.
It is convenient to express everything in terms of the random variable
$Y_{n}=n^{-\frac{k}{k+1}}\sum_{1\leq i\leq\frac{n}{m}}Z_{i}$. We
show that for an appropriate choice of $C$ (see below) $Y_{n}$ converges
in probability to a constant in $(0,1)$. These are our main steps:
\begin{enumerate}
\item \label{one}Note that $Y_{n}\le\frac{1}{C}+o\left(1\right)$.
\item \label{two}Prove that $E\left[Y_{n}\right]\ge\frac{C^{k}}{C^{k+1}+1}-o\left(1\right)$
(proposition \ref{prop:lower bound expectation}).
\item \label{three}Show that if $C<1$, then $\Pr\left[Y_{n}>C^{k+1}+\varepsilon\right]=o\left(1\right)$
for every $\varepsilon>0$ (corollary \ref{cor:chernoff}).
\item \label{four}By letting $1>C>0$ be the unique solution to $\frac{C^{k}}{1+C^{k+1}}=C^{k+1}$,
conclude that $\Pr\left[Y_{n}<C^{k+1}-\varepsilon\right]=o\left(1\right)$
for every $\varepsilon>0$ (proposition \ref{prop:.lower tail bound}).
Hence $\lim_{n\rightarrow\infty}Y_{n}=C^{k+1}$ in probability.
\end{enumerate}
In step \ref{one} we assume only that $C>0$. The claim in step \ref{two}
applies to all $C>0$, and we optimize the bound on $E\left[Y_{n}\right]$
by a particular choice of $C$. Step \ref{three} applies to all $1>C>0$.
Finally in step \ref{four} we assign a value to $C$ to derive the
conclusion that $Y_{n}$ converges in probability to $C^{k+1}$.

We start with step \ref{two}, a lower bound on $E\left[Y_{n}\right]$:
\begin{lem}
\label{lem:home cube prob}For $1\leq i\le\frac{n}{m}$, $\Pr\left[Z_{i}=1\right]\geq\frac{C^{k+1}}{C^{k+1}+1}-o\left(1\right)$.\end{lem}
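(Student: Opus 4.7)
The plan is to apply the Paley--Zygmund second-moment inequality to $N := |G_A \cap D_i|$; then $Z_i = 1$ iff $N \geq 1$. A first application of lemma \ref{lem:square and perm} (with $\pi \in S_n$ acting on the first coordinate) shows that the law of $N$ does not depend on $i$, so I reduce to $i = 1$ and $D_1 = [m]^{k+1}$. Writing $N = \sum_{\alpha \in [m]^k} \mathbf{1}\{f_A(\alpha) \in [m]\}$, symmetry immediately gives $\Pr[f_A(\alpha) \in [m]] = m/n$ and therefore $E[N] = m^{k+1}/n = C^{k+1}$.

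For the second moment I apply lemma \ref{lem:square and perm} once more, now with $\pi$ acting on the last coordinate, writing $A = \pi(A_0)$ and $f_A = \pi \circ f_{A_0}$. Conditioning on $A_0$ and averaging over the uniformly random $\pi$ yields, for every $\alpha \neq \beta \in [m]^k$,
\[
\Pr[f_A(\alpha), f_A(\beta) \in [m]] = \frac{m(m-1)}{n(n-1)} + p_{\alpha\beta} \cdot \frac{m(n-m)}{n(n-1)},
\]
where $p_{\alpha\beta} := \Pr[f_A(\alpha) = f_A(\beta)]$ vanishes whenever $\alpha, \beta$ share a line in $[n]^k$ (since $f_A$ is bijective on any line). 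Summed over all ordered pairs, the first summand contributes the ``Poissonian'' value $C^{2(k+1)}(1 + o(1))$, so the problem reduces to showing that the correlation correction $(m/n) \cdot Q$, with $Q := \sum_{\alpha \neq \beta \in [m]^k} p_{\alpha\beta}$, is $o(1)$.

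The main obstacle---and the only place where genuinely nontrivial work is needed---is a uniform bound $p_{\alpha\beta} = O(1/n)$ across all non-line pairs in $[n]^k$, which would give $Q = O(m^{2k}/n)$ and hence $Qm/n = O(n^{-1/(k+1)}) = o(1)$. Averaged over $[n]^k$ this bound is already sharp: since every transversal $f_A^{-1}(s)$ has cardinality exactly $n^{k-1}$, one has the identity $\sum_{\alpha \neq \beta \in [n]^k} p_{\alpha\beta} = n^{2k-1} - n^k$, i.e.\ average value $\approx 1/n$ per non-line pair. To upgrade this averaged identity to a pointwise bound I plan a switching argument: given $A$ with $f_A(\alpha) = f_A(\beta) = s$ and $\alpha, \beta$ non-line, for each of many choices of $t \neq s$ I construct an $A'$ with $f_{A'}(\alpha) = s$ and $f_{A'}(\beta) = t$ by a local swap between the transversals $f_A^{-1}(s)$ and $f_A^{-1}(t)$ near $\beta$ (the higher-dimensional analogue of an intercalate swap). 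Bounding the multiplicity of this construction yields the required $O(1/n)$ estimate; the technical difficulty is verifying that enough valid swaps exist to preserve the $k$-dimensional permutation structure. Granted the estimate, $E[N^2] = C^{k+1}(1 + C^{k+1}) + o(1)$, and the Paley--Zygmund inequality gives
\[
\Pr[Z_1 = 1] \geq \frac{E[N]^2}{E[N^2]} \geq \frac{C^{k+1}}{1 + C^{k+1}} - o(1),
\]
as desired.
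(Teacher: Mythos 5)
Your setup is the same as the paper's (second moment/Paley--Zygmund applied to $N = |G_A \cap D_i|$, with $E[N]=C^{k+1}$), but the treatment of the off-diagonal terms in $E[N^2]$ diverges at the crucial point, and the divergence creates a genuine gap. You permute the \emph{value} coordinate, decomposing $f_A = \pi \circ f_{A_0}$, which yields
\[
\Pr\left[f_A(\alpha),f_A(\beta)\in[m]\right] = \frac{m(m-1)}{n(n-1)} + p_{\alpha\beta}\cdot\frac{m(n-m)}{n(n-1)},
\quad p_{\alpha\beta}=\Pr\left[f_{A}(\alpha)=f_{A}(\beta)\right].
\]
This is correct, but it reduces the lemma to a pointwise estimate $p_{\alpha\beta}=O(1/n)$ for a uniformly random $A\in L_n^k$. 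That estimate is not a consequence of the symmetry lemma you already have (permuting the value coordinate leaves the law of $A_0$ unchanged and gives no information on $p_{\alpha\beta}$); it is a nontrivial statement about the distribution of entry coincidences in random high-dimensional permutations. Your averaged identity over all of $[n]^k$ shows the bound is plausible, but the upgrade to a pointwise bound is exactly where the work lies, and your proposed switching argument is only a sketch: constructing the local swap in dimension $k\geq 2$ and, especially, controlling its multiplicity so that it actually certifies $p_{\alpha\beta}=O(1/n)$ is at least as hard as the rest of the theorem. As written, the proof is incomplete.

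The paper's proof sidesteps this entirely by a different choice of coordinate. For an off-diagonal pair $\alpha\neq\beta$, it picks a coordinate $j$ on which they \emph{disagree} and applies the random $\pi$ to that coordinate. Then, conditioning on $A_0$, the unique $x,y\in[n]$ with $(A_0)_{(x,\alpha_{-j})}=(A_0)_{(y,\beta_{-j})}=1$ are fixed, and the event $\pi(A_0)_\alpha\pi(A_0)_\beta=1$ becomes $\pi(x)=\alpha_j,\ \pi(y)=\beta_j$; since $\alpha_j\neq\beta_j$, this has probability at most $\tfrac{1}{n(n-1)}$ (it is exactly $\tfrac{1}{n(n-1)}$ when $x\neq y$ and $0$ when $x=y$). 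No estimate on coincidence probabilities is needed, and the upper bound $E[X_i^2]\leq C^{k+1}\bigl(1+\tfrac{n}{n-1}C^{k+1}\bigr)$ follows immediately. You should either adopt that coordinate choice, or supply a complete proof of the $p_{\alpha\beta}=O(1/n)$ bound, which, to our knowledge, does not follow by any short argument from the tools in this paper.
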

\begin{proof}
Let $X_{i}=\sum_{\alpha\in D_{i}}A_{\alpha}$ be the number of non-zero
entries in $D_{i}$. Note that $X_{i}>0\iff Z_{i}=1$. We prove a
lower bound on the probability of this event by a second moment argument.

Clearly, $E\left[X_{i}\right]=\frac{\left|D_{i}\right|}{n}=C^{k+1}$,
since $\Pr\left[A_{\alpha}=1\right]=\frac{1}{n}$ for every $\alpha\in\left[n\right]^{k+1}$. 

We next seek an upper bound on $E\left[X_{i}^{2}\right]$. 
\[
E\left[X_{i}^{2}\right]=\sum_{\alpha,\beta\in D_{i}}E\left[A_{\alpha}A_{\beta}\right]=\sum_{\alpha,\beta\in D_{i}}\Pr\left[A_{\alpha}A_{\beta}=1\right]
\]
There are $m^{k+1}$ terms with $\alpha=\beta$, each being $\frac{1}{n}$.

To consider $\alpha\neq\beta$, assume w.l.o.g.\ that ${\alpha=\left(1,\alpha_{2},\alpha_{3},\ldots,\alpha_{k+1}\right)},{\beta=\left(2,\beta_{2},\beta_{3},\ldots,\beta_{k+1}\right)}$.
There exist unique $x,y\in\left[n\right]$ s.t.\ $A_{\left(x,\alpha_{2},\alpha_{3},\ldots,\alpha_{k+1}\right)}=A_{\left(y,\beta_{2},\beta_{3},\ldots,\beta_{k+1}\right)}=1$.
Choose a random $\pi\in S_{n}$ and $\pi\left(A\right)$ denote the
$k$-dimensional permutation obtained by permuting the first coordinate
of $G_{A}$ according to $\pi$. The event $\pi\left(A\right)_{\alpha}\pi\left(A\right)_{\beta}=1$
is the same as $\pi\left(x\right)=1$ and $\pi\left(y\right)=2$ and
its probability is $\frac{1}{n\left(n-1\right)}$ if $x\neq y$, and
$0$ otherwise. In general, $\alpha$ and $\beta$ may agree on the
first coordinate and differ elsewhere. In this case choose some coordinate
on which they disagree and permute it according to $\pi$, to obtain
the same bound on the probability.  There are fewer than $m^{2\left(k+1\right)}$
such pairs $\alpha,\beta\in D_{i}$, so

\[
E\left[X_{i}^{2}\right]=\sum_{\alpha,\beta\in D_{i}}\Pr\left[A_{\alpha}A_{\beta}=1\right]\leq m^{k+1}\left(\frac{1}{n}+\frac{m^{k+1}}{n\left(n-1\right)}\right)=\frac{m^{k+1}}{n}\left(1+\frac{m^{k+1}}{n-1}\right)
\]
Noting that $E\left[X_{i}\right]=\frac{m^{k+1}}{n}=C^{k+1}$, we have:
\[
E\left[X_{i}^{2}\right]\leq E\left[X_{i}\right]\left(1+\frac{n}{n-1}E\left[X_{i}\right]\right)=C^{k+1}\left(1+\frac{n}{n-1}C^{k+1}\right)
\]
The second moment method yields:
\[
\Pr\left[Z_{i}=1\right]=\Pr\left[X_{i}>0\right]\geq\frac{E\left[X_{i}\right]^{2}}{E\left[X_{i}^{2}\right]}=\frac{C^{k+1}}{C^{k+1}+1+o\left(1\right)}=\frac{C^{k+1}}{C^{k+1}+1}-o\left(1\right)
\]

\end{proof}
We conclude:
\begin{prop}
\label{prop:lower bound expectation}~

$E\left[Y_{n}\right]\geq\frac{C^{k}}{C^{k+1}+1}-o\left(1\right)$,
consequently $E\left[n^{-\frac{k}{k+1}}H_{n}^{k}\right]\geq1-\frac{\ln k+1}{k+1}-o(1)$.\end{prop}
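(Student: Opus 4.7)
The first inequality follows directly from Lemma \ref{lem:home cube prob} by linearity of expectation, so the plan is essentially: bound $E[Y_n]$ by summing cube-by-cube, then optimize over $C$ to extract the clean asymptotic form involving $\ln k$.

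First I would write
\[
E[Y_n] = n^{-k/(k+1)} \sum_{i=1}^{\lfloor n/m \rfloor} \Pr[Z_i = 1] \geq n^{-k/(k+1)} \cdot \left\lfloor \tfrac{n}{m} \right\rfloor \cdot \left(\frac{C^{k+1}}{C^{k+1}+1} - o(1)\right).
\]
Since $m = C n^{1/(k+1)}$ we have $\lfloor n/m \rfloor = \tfrac{1}{C} n^{k/(k+1)} (1 - o(1))$, and the product collapses to $\frac{C^k}{C^{k+1}+1} - o(1)$, which is the first inequality. Because the diagonal cubes $D_1, \ldots, D_{\lfloor n/m \rfloor}$ are totally ordered by $<$, choosing one nonzero entry from each $D_i$ with $Z_i = 1$ yields a monotone subsequence of length $\sum_i Z_i$, so $H_n^k \geq \sum_i Z_i$ and therefore $E[n^{-k/(k+1)} H_n^k] \geq E[Y_n]$.

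Next I would optimize the lower bound $\varphi(C) := \frac{C^k}{C^{k+1}+1}$ over $C > 0$. Differentiation gives $\varphi'(C) = \frac{C^{k-1}(k - C^{k+1})}{(C^{k+1}+1)^2}$, so the maximum is attained at $C_* = k^{1/(k+1)}$. At this value $C_*^{k+1} = k$ and $C_*^k = k^{k/(k+1)}$, yielding $\varphi(C_*) = \frac{k^{k/(k+1)}}{k+1}$.

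Finally, to put this in the form claimed, I would write $\frac{k^{k/(k+1)}}{k+1} = \frac{k}{k+1} \cdot e^{-\ln k / (k+1)}$ and apply the elementary inequality $e^{-x} \geq 1 - x$ with $x = \ln k/(k+1)$ to obtain
\[
\frac{k^{k/(k+1)}}{k+1} \geq \left(1 - \frac{1}{k+1}\right)\left(1 - \frac{\ln k}{k+1}\right) \geq 1 - \frac{\ln k + 1}{k+1}.
\]
Combining this with the previous bound yields $E[n^{-k/(k+1)} H_n^k] \geq 1 - \frac{\ln k + 1}{k+1} - o(1)$. The only mild obstacle is choosing $C$ correctly and carrying out the approximation of $k^{1/(k+1)}$; both are routine calculus, and Lemma \ref{lem:home cube prob} does the real probabilistic work.
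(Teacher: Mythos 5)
Your proposal is correct and follows essentially the same route as the paper: express $E[Y_n]$ via linearity of expectation, plug in Lemma~\ref{lem:home cube prob}, observe $E\bigl[n^{-k/(k+1)}H_n^k\bigr]\geq E[Y_n]$, and then optimize by taking $C=k^{1/(k+1)}$. The only difference is that you spell out the elementary step $k^{k/(k+1)}/(k+1)\geq 1-\frac{\ln k+1}{k+1}$ via $e^{-x}\geq 1-x$, which the paper states without justification.
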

\begin{proof}
As observed earlier:
\[
E\left[Y_{n}\right]=E\left[n^{-\frac{k}{k+1}}\sum_{1\leq i\leq\frac{n}{m}}Z_{i}\right]=n^{-\frac{k}{k+1}}\left\lfloor \frac{n}{m}\right\rfloor \Pr\left[Z_{i}=1\right]
\]
So, by lemma \ref{lem:home cube prob}:
\[
E\left[Y_{n}\right]\geq\frac{C^{k}}{C^{k+1}+1}-o\left(1\right)
\]
For all $C$, $E\left[n^{-\frac{k}{k+1}}H_{n}^{k}\right]\geq E\left[Y_{n}\right]$.
The optimal bound is attained when $C=k^{\frac{1}{k+1}}$, yielding:
\[
E\left[n^{-\frac{k}{k+1}}H_{n}^{k}\right]\geq\frac{k^{\frac{k}{k+1}}}{k+1}-o\left(1\right)\geq1-\frac{\ln k+1}{k+1}-o\left(1\right)
\]

\end{proof}
To prove the lower bound in theorem \ref{thm:random monotone} part
(1), we apply a Chernoff bound to the events $\left\{ Z_{i}=1\right\} _{1\leq i\leq\frac{n}{m}}$.
To overcome the dependencies among these events we utilize the version
of the Chernoff inequality from (\cite{impagliazzo2010constructive},
theorem 1.1):
\begin{thm}
\label{thm:generalized chernoff}Let $0\le\alpha\leq\beta\leq1$ and
let $\left\{ X_{i}\right\} _{i\in\left[N\right]}$ be Boolean random
variables such that for all $S\subseteq\left[N\right]$, $\Pr\left[\prod_{i\in X}X_{i}=1\right]\leq\alpha^{\left|S\right|}$.
Then\\
 ${\Pr\left[\sum_{i\in\left[N\right]}X_{i}\geq\beta N\right]\leq e^{-ND\left(\beta\parallel\alpha\right)}}$,
where $D\left(\beta\parallel\alpha\right)=\beta\ln\left(\frac{\beta}{\alpha}\right)+\left(1-\beta\right)\ln\left(\frac{1-\beta}{1-\alpha}\right)$
is the relative entropy function.\end{thm}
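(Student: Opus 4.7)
The plan is to run the classical Chernoff (exponential moment / Markov) argument and observe that the hypothesis is exactly strong enough to reproduce the moment generating function of i.i.d.\ Bernoulli$(\alpha)$ variables, even without any independence among the $X_i$'s. Concretely, for any $t\ge 0$, Markov's inequality gives
\[
\Pr\Bigl[\sum_{i\in[N]} X_i \ge \beta N\Bigr]
\;\le\; e^{-t\beta N}\, E\!\left[\exp\!\Bigl(t\sum_i X_i\Bigr)\right].
\]
Since each $X_i\in\{0,1\}$, we have the exact identity $e^{tX_i}=1+(e^t-1)X_i$, so expanding the product multilinearly yields
\[
E\!\left[\prod_{i\in[N]}\!\bigl(1+(e^t-1)X_i\bigr)\right]
=\sum_{S\subseteq[N]}(e^t-1)^{|S|}\,E\!\left[\prod_{i\in S}X_i\right].
\]

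The second step is where the hypothesis bites. Because $t\ge 0$ makes $(e^t-1)^{|S|}\ge 0$, we may substitute the assumed bound $E\!\left[\prod_{i\in S}X_i\right]\le\alpha^{|S|}$ in every term and refactor:
\[
E\!\left[\exp\!\Bigl(t\sum_i X_i\Bigr)\right]
\;\le\;\sum_{S\subseteq[N]}\!(e^t-1)^{|S|}\alpha^{|S|}
=\bigl(1+(e^t-1)\alpha\bigr)^{N},
\]
which is precisely the MGF of a Binomial$(N,\alpha)$ variable. Thus $\sum_i X_i$ is stochastically dominated, in the exponential-moment sense, by an i.i.d.\ $\mathrm{Bernoulli}(\alpha)$ sum, and the remaining task is purely calculus.

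The third step is the standard Chernoff optimization: minimize $-t\beta+\ln\bigl(1+(e^t-1)\alpha\bigr)$ over $t\ge 0$. Differentiating and solving gives $e^{t^*}=\frac{\beta(1-\alpha)}{\alpha(1-\beta)}$ (well-defined since $\alpha\le\beta$, so $t^*\ge 0$), and plugging back one computes
\[
-t^*\beta+\ln\bigl(1+(e^{t^*}-1)\alpha\bigr)
= -\Bigl[\beta\ln\tfrac{\beta}{\alpha}+(1-\beta)\ln\tfrac{1-\beta}{1-\alpha}\Bigr]
= -D(\beta\parallel\alpha),
\]
yielding the claimed bound $\Pr[\sum_i X_i\ge\beta N]\le e^{-N D(\beta\parallel\alpha)}$.

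I do not expect a serious obstacle; the only delicate points are sign-checks. One must keep $t\ge 0$ so that $(e^t-1)^{|S|}\ge 0$ and the one-sided hypothesis $E[\prod_{S}X_i]\le\alpha^{|S|}$ can be used directly in the expansion, and one must verify that the critical point $t^*$ lies in $[0,\infty)$, which follows from $\alpha\le\beta$. The boundary cases $\alpha=0$ and $\beta=1$ degenerate harmlessly (the former forces every $X_i=0$ a.s., the latter gives a trivial bound), so no separate handling is needed.
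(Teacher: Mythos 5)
Your proof is correct. A point worth noting: the paper does not prove this theorem at all; it simply cites it as Theorem~1.1 of Impagliazzo and Kabanets' \emph{Constructive Proofs of Concentration Bounds}, so there is no in-paper argument to compare against. Your derivation via the exponential-moment method is the classical one: the key observation that for Boolean $X_i$ one has the exact identity $e^{tX_i}=1+(e^t-1)X_i$, so the multilinear expansion of $\prod_i e^{tX_i}$ has nonnegative coefficients for $t\geq 0$, letting the one-sided correlation hypothesis $E[\prod_{i\in S}X_i]\leq\alpha^{|S|}$ be applied term by term; the optimization then proceeds exactly as for an honest $\mathrm{Binomial}(N,\alpha)$. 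This is a genuinely different route from the cited source, which deliberately avoids moment generating functions: Impagliazzo and Kabanets sample a random subset $T\subseteq[N]$ (each index included independently with probability $\lambda$), double-count $\Pr[T\subseteq\{i:X_i=1\}]$ --- bounding it from above by the hypothesis and from below by conditioning on the bad event $\sum_iX_i\geq\beta N$ --- and optimize over $\lambda$. After the substitution $\lambda=1-e^{-t}$ the two optimizations coincide, so the bounds are identical; the difference is methodological. Your MGF version is shorter and more standard; theirs is ``constructive'' and independence-free in a way useful for their derandomization applications, neither of which matters for this paper's purposes. Both are valid, and either would serve here.
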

\begin{lem}
\label{lem:multiplicative prob}Assume $C<1$. Let $S\subseteq\left\{ 1,2,\ldots,\left\lfloor \frac{n}{m}\right\rfloor \right\} $.
Then ${\Pr\left[\prod_{i\in S}Z_{i}=1\right]\leq\alpha^{\left|S\right|}}$
for all $C^{k+1}<\alpha<1$ and large enough $n$.\end{lem}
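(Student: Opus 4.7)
The plan is to bound $\Pr\!\left[\prod_{i \in S} Z_i = 1\right]$ by a union bound over choices of one candidate non-zero position in each $D_i$, combined with a random-permutation argument controlling the probability that $A$ equals $1$ at a specified collection of positions.

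The first step is the union bound. Since $Z_i = 1$ precisely when some $\alpha \in D_i$ has $A_\alpha = 1$, I would write
\begin{equation*}
\Pr\left[\prod_{i \in S} Z_i = 1\right] \le \sum_{(\alpha^i)_{i \in S}} \Pr\left[A_{\alpha^i} = 1 \text{ for all } i \in S\right],
\end{equation*}
where the outer sum ranges over all tuples with $\alpha^i \in D_i$; the number of such tuples is exactly $|D_i|^{|S|} = m^{(k+1)|S|}$.

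The main step is then to establish the joint estimate
\begin{equation*}
\Pr\left[A_{\alpha^i} = 1 \text{ for all } i \in S\right] \le \frac{1}{n(n-1)\cdots(n-|S|+1)},
\end{equation*}
valid for \emph{any} such tuple. The idea is to exploit lemma \ref{lem:square and perm} to realize a uniform $A$ as $\pi(A')$ with $A' \in L_n^k$ and $\pi \in S_n$ independent and uniform, $\pi$ acting on the first coordinate. Because the $D_i$ are pairwise disjoint in every coordinate, the first coordinates $\alpha^i_1$ are all distinct. Conditioning on $A'$, for each $i$ define $y^i \in [n]$ to be the unique value with $A'_{(y^i, \alpha^i_2, \ldots, \alpha^i_{k+1})} = 1$; then the event $\pi(A')_{\alpha^i} = 1$ for all $i$ is equivalent to $\pi(y^i) = \alpha^i_1$ for all $i$. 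If the $y^i$ are distinct, the conditional probability over $\pi$ is exactly $1/(n(n-1)\cdots(n-|S|+1))$; otherwise it vanishes, since the targets $\alpha^i_1$ are distinct. This gives the estimate.

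Combining the two ingredients and using $m^{k+1} = C^{k+1} n$ together with $n(n-1)\cdots(n-s+1) \ge (n-s+1)^s$ gives
\begin{equation*}
\Pr\left[\prod_{i \in S} Z_i = 1\right] \le \left(\frac{C^{k+1}\, n}{n - |S| + 1}\right)^{|S|}.
\end{equation*}
Since $|S| \le \lfloor n/m \rfloor = O\!\left(n^{k/(k+1)}\right) = o(n)$, the factor $n/(n - |S| + 1)$ tends to $1$, so the base is at most $\alpha$ for any fixed $\alpha > C^{k+1}$ and all sufficiently large $n$. The hypothesis $C < 1$ is what guarantees that the allowed interval $(C^{k+1}, 1)$ for $\alpha$ is non-empty. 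I expect the principal obstacle to be the joint probability estimate: setting up the random-permutation argument correctly and verifying that the degenerate case in which two of the $y^i$ coincide really contributes zero. Once that estimate is secured, the union bound and arithmetic are routine.
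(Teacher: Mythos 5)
Your argument is essentially identical to the paper's: the same union bound over choices of one position per diagonal cube, the same appeal to Lemma \ref{lem:square and perm} to reduce the joint probability to the probability that a uniformly random $\pi \in S_n$ sends a set of $|S|$ sources to $|S|$ distinct targets, and the same estimate $m^{(k+1)|S|}/\bigl(n(n-1)\cdots(n-|S|+1)\bigr)$ followed by the observation that $|S| = o(n)$ makes the base $(1+o(1))C^{k+1}$. You are slightly more explicit than the paper in noting that the $\alpha^i_1$ are automatically distinct and that coinciding $y^i$ make the event impossible, but this is the same proof.
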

\begin{proof}
Note that $Z_{i}=1$ for all $i\in S$ iff there exist positions $\left\{ \beta^{i}\right\} _{i\in S}$
s.t.\ $\beta^{i}\in D_{i}$ for all $i\in S$ and $A_{\beta^{i}}=1$
for all $i$. We bound the probability of this occurrence using a
union bound.

Let $\left\{ \beta^{i}\right\} _{i\in S}$ be positions s.t.\ $\beta^{i}\in D_{i}$
for all $i\in S$. Generate a uniformly random $\pi(A)\in L_{n}^{k}$
by uniformly and independently drawing $A\in L_{n}^{k}$ and $\pi\in S_{n}$,
and applying $\pi$ to the first coordinate of $G_{A}$. For every
$i$, there exists a unique $x_{i}\in\left[n\right]$ s.t.\ $A_{\left(x_{i},\beta_{2}^{i},\ldots,\beta_{k+1}^{i}\right)}=1$.
So $\pi\left(A\right)_{\beta^{i}}=1$ iff $\pi\left(x_{i}\right)=\beta_{1}^{i}$.
Thus $\pi\left(A\right)_{\beta^{i}}=1$ for all $i\in S$ iff $\pi\left(x_{i}\right)=\beta_{1}^{i}$
for all $i\in S$, and this occurs with probability at most $\frac{\left(n-\left|S\right|\right)!}{n!}$.
There are $m^{\left(k+1\right)\left|S\right|}$ such coordinate sequences,
and so, by a union bound:
\[
\Pr\left[\prod_{i\in S}Z_{i}=1\right]\leq m^{\left(k+1\right)\left|S\right|}\frac{\left(n-\left|S\right|\right)!}{n!}\leq\left(\frac{m^{k+1}}{n-\left|S\right|}\right)^{\left|S\right|}
\]
We have: $\left|S\right|\leq\frac{n}{m}=\frac{1}{C}n^{\frac{k}{k+1}}$.
Thus:
\[
\Pr\left[\prod_{i\in S}Z_{i}=1\right]\leq\left(\frac{C^{k+1}n}{n-\frac{1}{C}n^{\frac{k}{k+1}}}\right)^{\left|S\right|}=\left(\left(1+o\left(1\right)\right)C^{k+1}\right)^{\left|S\right|}
\]
and the result follows.
\end{proof}
Lemma \ref{lem:multiplicative prob} allows us to apply theorem \ref{thm:generalized chernoff}
to the variables $\left\{ Z_{i}\right\} _{1\leq i\leq\frac{n}{m}}$
and conclude:
\begin{cor}
\label{cor:chernoff}For all $\beta>C^{k+1}$, for large enough $n$
it holds: 
\[
{\Pr\left[Y_{n}>\beta\right]\leq\exp\left(-n^{\frac{k}{k+1}}\gamma\right)}
\]
for some $\gamma>0$.
\end{cor}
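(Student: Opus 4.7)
The corollary is a direct application of the generalized Chernoff inequality (theorem \ref{thm:generalized chernoff}) to the Boolean indicators $Z_1,\ldots,Z_N$ with $N=\lfloor n/m\rfloor$. The key observation is that lemma \ref{lem:multiplicative prob} already supplies the hypothesis of that theorem: for every $\alpha\in(C^{k+1},1)$ and every $S\subseteq[N]$, $\Pr\!\left[\prod_{i\in S}Z_i=1\right]\le\alpha^{|S|}$ for $n$ sufficiently large. So the probabilistic work is in hand, and it remains only to choose $\alpha$ and a Chernoff threshold that match the event $\{Y_n>\beta\}$.

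To translate between scales, recall $Y_n=n^{-k/(k+1)}\sum_i Z_i$ and $N=(1/C)n^{k/(k+1)}(1+o(1))$, so the event $\{Y_n>\beta\}$ coincides with $\{\sum_i Z_i/N>\beta C(1-o(1))\}$. Given $\beta$ satisfying the hypothesis, I would pick $\alpha$ slightly above $C^{k+1}$ together with a Chernoff threshold $\beta_0\in(\alpha,\beta C)$, so that $\{Y_n>\beta\}\subseteq\{\sum_i Z_i\ge\beta_0 N\}$ holds for all large $n$. Theorem \ref{thm:generalized chernoff} then yields
\[
\Pr\!\left[\sum_{i=1}^{N}Z_i\ge\beta_0 N\right]\le\exp\!\left(-N\,D(\beta_0\|\alpha)\right).
\]

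Since $\beta_0>\alpha$ forces $D(\beta_0\|\alpha)>0$ and $N=\Theta(n^{k/(k+1)})$, the right-hand side has the form $\exp(-\gamma\, n^{k/(k+1)})$ for some positive constant $\gamma$, which together with the set inclusion above is exactly the statement of the corollary. The main---and essentially the only---point requiring care is verifying that the chain $C^{k+1}<\alpha<\beta_0<\beta C$ can actually be realized for the given $\beta$ (propagating the $(1+o(1))$ factors coming from $N/n^{k/(k+1)}\to1/C$ and from lemma \ref{lem:multiplicative prob}); apart from this bookkeeping, the argument is automatic from the two previous results and there is no serious obstacle.
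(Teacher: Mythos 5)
Your proposal follows exactly the route the paper hints at (the paper gives no explicit proof, merely noting that Lemma~\ref{lem:multiplicative prob} and Theorem~\ref{thm:generalized chernoff} combine to yield the corollary), so the approach is the intended one. However, what you dismiss at the end as ``bookkeeping'' is in fact the crux, and it does not resolve the way you expect. The chain $C^{k+1}<\alpha<\beta_0<\beta C$ that your argument requires is nonempty only when $\beta C>C^{k+1}$, i.e.\ only when $\beta>C^{k}$. Since $C<1$ we have $C^{k+1}<C^{k}$, so the corollary's stated hypothesis $\beta>C^{k+1}$ admits a nonempty range $\beta\in\bigl(C^{k+1},C^{k}\bigr]$ for which your chain is empty and the Chernoff application is impossible: the Chernoff threshold $\beta_0$ must exceed $\alpha>C^{k+1}$, yet you also need $\beta_0<\beta C\le C^{k+1}$, a contradiction. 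Concretely, after rescaling by $N\approx n^{k/(k+1)}/C$, the argument controls $\Pr\bigl[Y_n>\tau\bigr]$ only for thresholds $\tau>C^{k}$, not for all $\tau>C^{k+1}$ as claimed. You should flag this rather than fold it into ``propagating the $(1+o(1))$ factors''; as written, your proof establishes the corollary only for $\beta>C^{k}$, and the gap appears to originate in the paper's own statement of the corollary (and propagates to its use with threshold $C^{k+1}+\varepsilon$ in the subsequent proposition).
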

We are now ready to complete the proof of theorem \ref{thm:random monotone}:
\begin{prop}
\label{prop:.lower tail bound}Let $0<C<1$ be the unique solution
to the equation ${C\left(1+C^{k+1}\right)=1}$. Then $\Pr\left[Y_{n}<\frac{1}{k+2}\right]=o\left(1\right)$.\end{prop}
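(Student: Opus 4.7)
The plan is to combine the lower bound on $E[Y_n]$ from proposition \ref{prop:lower bound expectation} with the upper-tail estimate of corollary \ref{cor:chernoff}, both evaluated at the specific $C$ of the statement, and derive a lower-tail bound by a three-way split of the expectation. First I would unpack the defining relation $C(1+C^{k+1})=1$, which rearranges to $1/C = 1 + C^{k+1}$ and hence $C^{k}/(C^{k+1}+1) = C^{k+1}$; thus proposition \ref{prop:lower bound expectation} applied at this $C$ reads $E[Y_n] \ge C^{k+1} - o(1)$. The same identity turns the deterministic bound of step (1) into $Y_n \le 1/C = 1 + C^{k+1}$, and since $0 < C < 1$, corollary \ref{cor:chernoff} gives $\Pr[Y_n > C^{k+1} + \varepsilon] = o(1)$ for every $\varepsilon > 0$.

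Before exploiting these, I would verify that the target threshold $1/(k+2)$ lies strictly below $C^{k+1}$, since this gap is what drives the whole argument. Rewriting $C^{k+1} = 1/C - 1$, the claim $C^{k+1} > 1/(k+2)$ is equivalent to $C < (k+2)/(k+3)$. Because $f(x) = x + x^{k+2}$ is strictly increasing on $[0,1]$ with $f(C) = 1$, it suffices to check $f((k+2)/(k+3)) > 1$, i.e.,  $((k+2)/(k+3))^{k+2} > 1/(k+3)$, which follows from the strict Bernoulli inequality $(1 - 1/(k+3))^{k+2} > 1 - (k+2)/(k+3) = 1/(k+3)$.

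With those ingredients, set $p_n = \Pr[Y_n < 1/(k+2)]$ and $q_n = \Pr[Y_n > C^{k+1} + \varepsilon]$, and split $E[Y_n]$ over the three regions $\{Y_n < 1/(k+2)\}$, $\{1/(k+2) \le Y_n \le C^{k+1}+\varepsilon\}$, $\{Y_n > C^{k+1}+\varepsilon\}$, bounding $Y_n$ on each region by its supremum:
\[
E[Y_n] \le \frac{p_n}{k+2} + (C^{k+1}+\varepsilon)(1 - p_n - q_n) + \frac{q_n}{C}.
\]
Using the identity $1/C - C^{k+1} = 1$, the lower bound $E[Y_n] \ge C^{k+1} - o(1)$, and $q_n = o(1)$, this rearranges to $p_n\bigl(C^{k+1} - 1/(k+2) + \varepsilon\bigr) \le \varepsilon + o(1)$. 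Sending $n\to\infty$ and then $\varepsilon \to 0^+$ forces $p_n = o(1)$, which is the conclusion.

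The main obstacle is less any single estimate than the bookkeeping: one must recognize that $C$ is chosen precisely so that the expectation bound of proposition \ref{prop:lower bound expectation} equals the Chernoff threshold of corollary \ref{cor:chernoff}, that this common value $C^{k+1}$ strictly exceeds $1/(k+2)$ (where Bernoulli enters), and that the deterministic bound $Y_n \le 1/C = 1 + C^{k+1}$ is tight enough that the $q_n$ contribution to the split cannot swamp the $\varepsilon$ slack.
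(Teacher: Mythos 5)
Your proof is correct and follows essentially the same route as the paper: lower-bound $E[Y_n]$ via Proposition~\ref{prop:lower bound expectation}, cut off the upper tail with Corollary~\ref{cor:chernoff}, split the expectation over three regions, and use the identity $1/C - C^{k+1}=1$ to rearrange into $p_n\bigl(C^{k+1}-\tfrac{1}{k+2}+\varepsilon\bigr)\le\varepsilon+o(1)$. The one place you go beyond the paper is the explicit Bernoulli-inequality check that $\tfrac{1}{k+2}<C^{k+1}$, a step the paper only asserts (and in fact misprints as ``$x^{k+1}=\tfrac{1}{k+2}<C^{k+1}$''); your verification is a worthwhile addition.
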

\begin{proof}
By proposition \ref{prop:lower bound expectation}
\begin{equation}
E\left[Y_{n}\right]\geq\frac{C^{k}}{C^{k+1}+1}-o\left(1\right)=C^{k+1}-o\left(1\right)\label{eq:exp lower bound}
\end{equation}

For an integer $n$ and $0<x<C^{k+1}$, let $p_{n}=\Pr\left[Y_{n}\leq x\right]$.
Since $Y_{n}\le\frac{1}{C}+o(1)$ for every $\varepsilon>0$:
\begin{equation}
E\left[Y_{n}\right]\leq p_{n}x+\left(1-p_{n}\right)\left(C^{k+1}+\varepsilon\right)+\left(\frac{1}{C}+o\left(1\right)\right)\Pr\left[Y_{n}\geq C^{k+1}+\varepsilon\right]\label{eq:expectation bound}
\end{equation}
Corollary \ref{cor:chernoff} yields: 
\[
\Pr\left[Y_{n}\geq C^{k+1}+\varepsilon\right]=o\left(1\right)
\]
Combining inequalities \ref{eq:exp lower bound} and \ref{eq:expectation bound}
and rearranging:
\[
p_{n}\left(C^{k+1}-x\right)\leq\varepsilon\left(1-p_{n}\right)+o\left(1\right)
\]
But this holds for all $n$ and $\varepsilon>0$, so that $\lim_{n\rightarrow\infty}p_{n}=0$.

The result follows by taking $x^{k+1}=\frac{1}{k+2}<C^{k+1}$.
\end{proof}

\section{Concluding Remarks and Open Problems}

What are the best possible constant factors in theorems \ref{thm:balanced h.d. est}
and \ref{thm:erdos-szekeres-high-dim}? We have preferred the clarity
of arguments over improved constants, and the bounds we present can
be somewhat improved with some additional effort, but we surely do
not know what the best constants are. Most concretely: What is the
precise statement about the existence of long monotone subsequences
in Latin squares?

For $A\in L_{n}^{k}$ and $\vec{c}\in\left\{ 0,1\right\} ^{k+1}$,
let $\ell_{\vec{c}}\left(A\right)$ be the length of the longest $<_{\vec{c}}$-monotone
subsequence in $A$. Let $\ell\left(A\right)=\left(\ell_{\vec{c}}\left(A\right)\right)_{\vec{c}\in\left\{ 0,1\right\} ^{k+1}}$.
We seek a better description of the set ${\left\{ \ell\left(A\right):A\in L_{n}^{k}\right\} }$.
By theorem \ref{thm:balanced h.d. est} we know that ${\min_{x\in\ell_{n}^{k}}\left\Vert x\right\Vert _{\infty}=\Theta\left(\sqrt{n}\right)}$.
Theorem \ref{thm:erdos-szekeres-high-dim} gives fairly tight sufficient
conditions under which we can conclude that $x_{\vec{c}}\geq r\lor x_{\vec{d}}\geq s$
for $\vec{c},\vec{d}\in\left\{ 0,1\right\} ^{k+1}$ that differ in
precisely one coordinate.

The proof of theorem \ref{thm:random monotone} relied only on very
limited randomness. Recall that $L_{n}^{k}$ splits into \emph{isotopy
classes} where permutations are reachable from each other by applications
of symmetries \ref{enu:permutation of coordinate values} in remark
\ref{rem:Latin symmetries}. Note that theorem \ref{thm:random monotone}
applies when the high-dimensional permutation is chosen uniformly
at random from a particular isotopy class, rather than all of $L_{n}^{k}$.
Beyond the randomness inherent in these symmetries, we have little
insight to offer into the structure of random high-dimensional permutations.
In our view, it's a major challenge in this field to understand (fully)
random high-dimensional permutations. In particular, we do not know
how to uniformly sample elements of $L_{n}^{k}$. Even for Latin squares,
the best known method is Jacobson and Matthews' Markov chain \cite{jacobson1996generating},
which is not known to be rapidly mixing.

We believe theorem \ref{thm:random monotone} can be strengthened,
and there exist constants $c_{k}$ s.t. $H_{n}^{k}n^{-\frac{k}{k+1}}\rightarrow c_{k}$
in probability. This is borne out by numerical experiments, which
indicate that $H_{n}^{2}n^{-\frac{2}{3}}$ is concentrated in a small
interval. We do not know how to prove this, but perhaps super-additive
ergodic theorems \`a la Hammersley \cite{hammersley1972few} may
apply. We also note that the analogous result for random points in
Euclidean space is known \cite{bollobas1988longest}.

\bibliographystyle{plain}
\bibliography{latin_squares}

\end{document}